\documentclass[11pt]{amsart}
\usepackage{floatrow}
\usepackage[usenames,dvipsnames]{xcolor} 
\usepackage{amsmath}
\usepackage[utf8]{inputenc}
\usepackage{amsfonts}
\usepackage{amssymb}
\usepackage{tikz-cd}
\usepackage{graphicx}
\usepackage{bm}
\usepackage[numbers,sort]{natbib}
\usepackage{enumitem}
\usepackage[margin=1.33in]{geometry}
\usepackage[pdftex,
pdfpagemode=UseNone,
breaklinks=true,
extension=pdf,
colorlinks=true,
linkcolor=blue,
citecolor=PineGreen,
urlcolor=blue,
]{hyperref}
\usepackage{amsthm}

\theoremstyle{theorem}
\newtheorem{theorem}{Theorem}[section]
\newtheorem{question}[theorem]{Question}
\newtheorem{lemma}[theorem]{Lemma}
\newtheorem{corollary}[theorem]{Corollary}
\newtheorem{proposition}[theorem]{Proposition}

\theoremstyle{definition}

\newtheorem{definition}[theorem]{Definition}

\newtheorem{notation}[theorem]{Notation}
\newenvironment{example}
{\pushQED{\qed}\examplex}
{\popQED\endexamplex}

\newenvironment{remark}
{\pushQED{\qed}\remarkx}
{\popQED\endremarkx}

\DeclareMathOperator{\QQ}{\mathbb{Q}}
\DeclareMathOperator{\init}{In}

\DeclareMathOperator{\lord}{lord}
\DeclareMathOperator{\tord}{tord}
\DeclareMathOperator{\ord}{ord}

\DeclareMathOperator{\lex}{lex}
\DeclareMathOperator{\gr}{gr}

\DeclareMathOperator{\Znn}{\mathbb{Z}_{\geqslant 0}}

\usepackage{listings}

\definecolor{dkgreen}{rgb}{0,0.6,0}
\definecolor{gray}{rgb}{0.5,0.5,0.5}
\definecolor{mauve}{rgb}{0.58,0,0.82}


\begin{document}

\title[Multiplicity structure of the arc space of a fat point]{Multiplicity structure of the arc space\\ of a fat point}

\author{Rida Ait El Manssour}
\address{MPI MiS, Inselstra{\ss}e 22, 04103 Leipzig, Germany}
\email{rida.manssour@mis.mpg.de}

\author{Gleb Pogudin}
\address{LIX, CNRS, \'Ecole Polytechnique, Institute Polytechnique de Paris, 1 rue Honor\'e d'Estienne d'Orves, 91120, Palaiseau, France}
\email{gleb.pogudin@polytechnique.edu}

\begin{abstract}
  The equation $x^m = 0$ defines a fat point on a line. The algebra of regular functions on the arc space of this scheme is the quotient of $k[x, x', x^{(2)}, \ldots]$ by all differential consequences of $x^m = 0$.
This infinite-dimensional algebra admits a natural filtration by finite dimensional algebras corresponding to the truncations of arcs.
We show that the generating series for their dimensions equals $\frac{m}{1 - mt}$.
We also determine the lexicographic initial ideal of the defining ideal of the arc space.
These results are motivated by nonreduced version of the geometric motivic Poincar\'e series, multiplicities in differential algebra, and connections between arc spaces and the Rogers-Ramanujan identities.
We also prove a recent conjecture put forth by Afsharijoo in the latter context.
\end{abstract}

\maketitle

\section{Introduction}

\subsection{Statement of the main result}
Let $k$ be a field of characteristic zero. 
Consider an ideal $I \subset k[\mathbf{x}]$, where $\mathbf{x} = (x_1, \ldots, x_n)$, defining an affine scheme $X$.
We consider the polynomial ring
\[
k[\mathbf{x}^{(\infty)}] := k[x_i^{(j)} \mid 1\leqslant i \leqslant n,\; j \geqslant 0]
\]
in infinitely many variables $\{x_i^{(j)} \mid 1\leqslant i \leqslant n,\; j \geqslant 0\}$.
This ring is equipped with a $k$-linear derivation $a \mapsto a'$ defined on the generators by
\[
  (x_i^{(j)})' = x_i^{(j + 1)} \text{ for } 1\leqslant i \leqslant n,\; j \geqslant 0.
\]
Then we define the ideal $I^{(\infty)} \subset k[\mathbf{x}^{(\infty)}]$ of the arc space of $X$ by
\[
  I^{(\infty)} := \langle f^{(j)} \mid f \in I, \; j \geqslant 0 \rangle.
\]

In this paper, we will focus on the case of a fat point $\mathcal{I}_m := \langle x^m \rangle \subset k[x]$ of multiplicity $m \geqslant 2$.
Although the zero set of $\mathcal{I}_m^{(\infty)}$ over $k$ consists of a single point with all the coordinates being zero, 
the dimension of the corresponding quotient algebra $k[x^{(\infty)}] / \mathcal{I}_m^{(\infty)}$ (the ``multiplicity'' of the arc space) is infinite.

One can obtain a finer description of the multiplicity structure of $k[x^{(\infty)}] / \mathcal{I}_m^{(\infty)}$ by considering its filtration by finite-dimensional algebras induced by the truncation of arcs
\[
  k[x^{(\leqslant \ell)}] / \mathcal{I}_m^{(\infty)} := k[x^{(\leqslant \ell)}] / ( k[x^{(\leqslant \ell)}] \cap \mathcal{I}_m^{(\infty)}),
\]
where $x^{(\leqslant \ell)} := \{x, x', \ldots, x^{(\ell)}\}$, and arranging the dimensions of these algebras into a generating series
\begin{equation}\label{eq:gen_ser}
  D_{\mathcal{I}_m}(t) := \sum\limits_{\ell = 0}^\infty \dim_k(k[x^{(\leqslant \ell)}] / \mathcal{I}_m^{(\infty)}) \cdot t^\ell.
\end{equation}
The main result of this paper is
\begin{equation}\label{eq:main_result}
  D_{\mathcal{I}_m}(t) = \frac{m}{1 - mt}.
\end{equation}

\subsection{Motivations and related results}

Our motivation for studying the series~\eqref{eq:gen_ser} comes from three different areas: algebraic geometry, differential algebra, and combinatorics.

\begin{itemize}[leftmargin=8pt]
    \item From the point of view of the \emph{algebraic geometry}, $I^{(\infty)}$ defines the \emph{arc space $\mathcal{L}(X)$}~\cite{ArcLecture} of the scheme $X$.
    Geometrically, the points of the arc space correspond to the Taylor coefficients of the $k[\![t]\!]$-points of $X$.
    The arc space of a variety can be viewed as an infinite-order generalization of the tangent bundle or the space of formal trajectories on the variety.
    For properties and applications of arc spaces, we refer to~\cite{ArcLecture, arcsbook}.
    
    The \emph{reduced structure} of an arc space is often described by means of the geometric motivic Poincar\'e series~\cite[\S~2.2]{ArcLecture}:
    \begin{equation}\label{eq:geompoincare}
      P_X(t) := \sum\limits_{\ell = 0}^\infty [\pi_\ell(\mathcal{L}(X))] \cdot t^\ell,
    \end{equation}
    where $\pi_\ell$ denotes the projection of $\mathcal{L}(X)$ to the affine subspace with the coordinates $\mathbf{x}^{(\leqslant \ell)}$ (i.e., the truncation at order $\ell$) and $[Z]$ denotes the class of variety $Z$ in the Grothendieck ring~\cite[\S~2.3]{ArcLecture}.
    A fundamental result about these series is the Denef-Loeser theorem~\cite[Theorem~1.1]{DeLoRational} saying that $P_X(t)$ is a rational power series.
    
    The arc spaces may also have a rich \emph{scheme (i.e.. nilpotent) structure} (see~\cite{linshaw2021standard, FM2018, dumanski2021reduced}) reflecting the geometry of the original scheme~\cite{S11, BH2020}.
    In the case of a fat point $\mathcal{I}_m = \langle x^m \rangle \subset k[x]$, we will have $\pi_\ell(\mathcal{L}(X)) \cong \mathbb{A}^0$, so the geometric motivic Poincar\'e series equal to
    \[
      P(t) = \frac{[\mathbb{A}^0]}{1 - t},
    \]
    where $[\mathbb{A}^0]$ is the class of a point.
    Note that the series does not depend on the multiplicity $m$ of the point.
    One way to capture the scheme structure of $\mathcal{L}(X)$ could be to take the components of the projections in~\eqref{eq:geompoincare} with their multiplicities.
    For example, for the case $\mathcal{I}_m$, one will get
    \[
        \sum\limits_{\ell = 0}^\infty \dim_k \bigl(k[x^{(\leqslant \ell)}] / \mathcal{I}_m^{(\infty)}\bigr) \cdot [\mathbb{A}^0] \cdot t^\ell = D_{\mathcal{I}_m}(t) [\mathbb{A}^0].
    \]
    Our result~\eqref{eq:main_result} implies that the series above is rational as in the Denef-Loeser theorem.
    Interestingly, the shape of the denominator is different from the one in~\cite[Theorem~2.2.1]{ArcLecture}.
    The formula above is not the only way to take the multiplicties into account, a related more popular approach is via Arc Hilbert-Poincar\'e series~\cite[\S 9]{handbook} (see also~\cite{M13, Bruschek2012}).
    
    \item \emph{Differential algebra} studies, in particular, differential ideals in $k[\mathbf{x}^{(\infty)}]$, that is, ideals closed under derivation.
    From this point of view, $I^{(\infty)}$ is the differential ideal generated by $I$.
    Understanding the structure of the differential ideals $\mathcal{I}_m^{(\infty)}$ is a key component of the low power theorem~\cite{lowpower, lowpower2} which provides a constructive way to detect singular solutions of algebraic differential equations in one variable.
    Besides that, various combinatorial properties of $\mathcal{I}_m^{(\infty)}$ have been studied in differential algebra, see~\cite{Keefe1960, x2, vertex, Zobnin2005, Zobnin2009, RidaAnnaLaura}.
    
    While there is a rich dimension theory for solution sets of systems of algebraic differential equations~\cite{diffpimpoly, ranks, koldim}, we are not aware of a notion of multiplicity of a solution of such a system.
    In particular, the existing differential analogue of the B\'ezout theorem~\cite{diffBezout} provides only a bound unlike the equality in classical B\'ezout theorem~\cite[Theorem~7.7, Chapter 1]{hartshorne}.
    Our result~\eqref{eq:main_result} suggests that one possibility is to define the multiplicity of a solution as the growth rate of multiplicities of its truncations, and this definition will be consistent with the usual algebraic multiplicity for the case of a fat point on a line.
    
    \item Connections between the multiplicity structure of the arc space of a fat point and Rogers-Ramanujan partition identities from \emph{combinatorics} were pointed out by Bruschek, Mourtada, and Schepers in~\cite{Bruschek2012} (for a recent survey, see~\cite[\S 9]{handbook}).
    In particular, they used Hilbert-Poincare series of similar nature to~\eqref{eq:gen_ser} (motivated by the singularity theory~\cite[Section~4]{M13}) for obtaining new proofs of the Rogers-Ramanujan identities and their generalizations.
    In this direction, new results have been obtained recently in~\cite{Afsharijoo2021, afsharijoo2021andrewsgordon,Bai2019}.
    In~\cite{Afsharijoo2021}, Afsharijoo used computational experiments to conjecture~\cite[Section~5]{Afsharijoo2021} the initial ideal of $\mathcal{I}_{m}^{(\infty)}$ with respect to the weighted lexicographic ordering (a special case was already conjectured in~\cite[Section~1]{AM2020}).
    This conjecture would imply a new set of partition identities~\cite[Conjecture~5.1]{Afsharijoo2021}.
    Using combinatorial techniques, some of them have been proved in~\cite{Afsharijoo2021}, and the rest were established in~\cite{afsharijoo2021andrewsgordon} (see also~\cite{lotharing}).
    However, the original algebraic conjecture about $\mathcal{I}_m^{(\infty)}$ remained open.
    As a byproduct of our proof of~\eqref{eq:main_result}, we prove this conjecture (see Theorem~\ref{thm:non-initial}) thus giving a new proof of one of the main results of~\cite{afsharijoo2021andrewsgordon}.
\end{itemize}

Understanding the structure of ideal $\mathcal{I}_m^{(\infty)}$ is known to be challenging: for example, its Gr\"obner basis with respect to the lexicographic ordering is not just infinite but even differentially infinite~\cite{Zobnin2005, AM2020}, and the question about the nilpotency index of $x_i^{(j)}$'s modulo $\mathcal{I}_m^{(\infty)}$ posed by Ritt in 1950~\cite[Appendix, Q.5]{Ritt} remained open for sixty years until~\cite{x2} (see also~\cite{Keefe1960, vertex}).

The statement~\eqref{eq:main_result} appeared in the Ph.D. thesis~\cite[Theorem~3.4.1]{GlebsPhd} of the second author but the proof given there was incorrect.
We are grateful to Alexey Zobnin for pointing out the error.
The proof presented in this paper uses different ideas than the erroneous proof in~\cite{GlebsPhd}.

\textbf{Update (20 February, 2024).} We would like to thank Ilya Dumanski for pointing out that the main dimension result~\eqref{eq:main_result} could also be deduced from a combination of Propositions 2.1 and 2.3 from~\cite{Feigin2002}.


\subsection{Overview of the proof}

The key technical tool used in our proofs is a representation of the quotient algebra $k[x^{(\infty)}] / \mathcal{I}_m^{(\infty)}$ as a subalgebra in certain differential exterior algebra constructed in~\cite{x2} (see Section~\ref{sec:exterior}).
 The injectivity of this representation builds upon the knowledge of a Gr\"obner basis for $\mathcal{I}_m^{(\infty)}$ with respect to degree reverse lexicographic ordering~\cite{Bruschek2012, Zobnin2009, lowpower}.
We approach the equality~\eqref{eq:main_result} as a collection of inequalities
\begin{equation}\label{eq:bounds}
  m^{\ell + 1} \leqslant \dim_k (k[x^{(\leqslant \ell)}] / \mathcal{I}_m^{(\infty)}) \leqslant m^{\ell + 1} \quad\text{ for every } \ell \geqslant 0,\; m \geqslant 1.
\end{equation}
The starting point of our proof of \emph{the lower bound} uses the insightful conjecture by Afsharijoo~\cite[Section~5]{Afsharijoo2021} suggesting how the standard monomials of $\mathcal{I}_m^{(\infty)}$ with respect to the lexicographic ordering look like.
Using the exterior algebra representation, we prove that these monomials are indeed linearly independent modulo $\mathcal{I}_m^{(\infty)}$, and deduce the lower bound from this (Section~\ref{sec:combi} and~\ref{sec:lower}).

In order to prove \emph{the upper bound} from~\eqref{eq:bounds}, we represent the image of $k[x^{(\leqslant\ell)}] / \mathcal{I}_m^{(\infty)}$ in the differential exterior algebra as a deformation of an algebra which splits as a direct product of $\ell + 1$ algebras of dimension $m$ thus yielding the desired upper bound (Section~\ref{sec:upper}).


\subsection{Structure of the paper}

The rest of the paper is organized as follows.
Section~\ref{sec:prelim} contains definitions and notations used to state the main results.
Section~\ref{sec:main} contains the main results of the paper. 
The proofs of the results are given in Section~\ref{sec:proofs}.
Section~\ref{sec:compute} describes computational experiments in {\sc Macaulay2} we performed to check whether formulas similar to~\eqref{eq:main_result} hold for more general fat points in $k^n$.
We formulate some open questions based on the results of these experiments.


\section{Preliminaries}\label{sec:prelim}

Definitions~\ref{def:diffrings}-\ref{def:diffideals} provide necessary background in differential algebra. 
For further details, we refer to~\cite[Chapter~1]{Kaplansky} or~\cite[\S I.1, I.2]{Kolchin}.

\begin{definition}[Differential rings and fields]\label{def:diffrings}
  A {\em differential ring} $(R,\,')$ is a commutative ring with a derivation $'\!\!:R\to R$, that is, a map such that, for all $a,b\in R$, $(a+b)'=a'+b'$ and $(ab)'=a'b+ab'$. 
  A {\em differential field} is a differential ring that is a field.
  For  $i>0$,  $a^{(i)}$ denotes the $i$-th order derivative of $a \in R$.
\end{definition}

\begin{notation}
  Let $x$ be an element of a differential ring and $h \in \Znn$. We introduce
  \[
    x^{(<h)} := (x, x', \ldots, x^{(h - 1)}) \quad \text{ and }\quad x^{(\infty)} := (x, x', x'', \ldots).
  \]
  $x^{(\leqslant h)}$ is defined analogously.
  If $\mathbf{x} = (x_1, \ldots, x_n)$ is a tuple of elements of a differential ring, then
  \[
    \mathbf{x}^{(< h)} := (x_1^{(< h)}, \ldots, x_n^{(<h)}) \quad\text{ and }\quad \mathbf{x}^{(\infty)} := (x_1^{(\infty)}, \ldots, x_n^{(\infty)}).
  \]
\end{notation}

\begin{definition}[Differential polynomials]
  Let $R$ be a differential ring. 
  Consider a ring of polynomials in infinitely many variables
  \[
  R[x^{(\infty)}] := R[x, x', x'', x^{(3)}, \ldots]
  \]
  and extend the derivation from $R$ to this ring by $(x^{(j)})' := x^{(j + 1)}$.
  The resulting differential ring is called \emph{the ring of differential polynomials in $x$ over $R$}.
  The ring of differential polynomials in several variables is defined by iterating this construction.
\end{definition}

\begin{definition}[Differential ideals]\label{def:diffideals}
   Let $S : =R[x_1^{(\infty)}, \ldots, x_n^{(\infty)}]$  be a ring of differential polynomials over a differential ring $R$.
   An ideal $I \subset S$ is called \emph{a differential ideal} if $a' \in I$ for every $a \in I$.
  
   One can verify that, for every $f_1, \ldots, f_s \in S$, the ideal
   \[
      \langle f_1^{(\infty)}, \ldots, f_s^{(\infty)} \rangle
    \]
    is a differential ideal.
    Moreover, this is the minimal differential ideal containing $f_1, \ldots, f_s$, and we will denote it by $\langle f_1, \ldots, f_s \rangle^{(\infty)}$.
\end{definition}

\begin{definition}[Fair monomials]\label{def:fair}
\begin{itemize}
    \item[]
    \item For a monomial $m = x^{(h_0)}x^{(h_1)}\cdots x^{(h_\ell)} \in k[x^{(\infty)}]$, we define \emph{the order} and \emph{lowest order} as $\ord m := \max\limits_{0 \leqslant i \leqslant \ell} h_i$ and $\lord m := \min\limits_{0 \leqslant i \leqslant \ell} h_i$, respectively.
    \item A monomial $m \in k[x^{(\infty)}]$ is called \emph{fair} (resp., \emph{strongly fair}) if 
    \[
    \lord m \geqslant \deg m - 1 \quad\text{(resp., $\lord m \geqslant \deg m$)}.
    \]
    We denote the sets of all fair and strongly fair monomials by $\mathcal{F}$ and $\mathcal{F}_s$, respectively.
    By convention, $1 \in \mathcal{F}$ and $1 \in \mathcal{F}_s$.
    Note that $\mathcal{F}_s \subset \mathcal{F}$.
    
    \item For every integers $a, b \geqslant 0$,
    we define
    \[
        \mathcal{F}_{a, b} := \mathcal{F}^{a} \cdot \mathcal{F}_s^{b},
    \]
    where the product of sets of monomials is the set of pairwise products.
    In other words, $\mathcal{F}_{a, b}$ is a set of all monomials representable as a product of $a$ fair monomials and $b$ strongly fair monomials.
\end{itemize}
\end{definition}

\begin{remark}
The notion of fair monomials was inspired from the conjectured construction of the initial ideal of $\langle x^i , (x^m)^{(\infty)} \rangle$ given in  \cite[conjecture 5.1]{Afsharijoo2021}.
We use the notion to formulate concisely and prove the conjecture (see Theorem~\ref{thm:non-initial}).
\end{remark}

\begin{example}
  The monomials of order at most two in $\mathcal{F}$ and $\mathcal{F}_s$ are:
  \begin{align*}
    \mathcal{F} \cap k[x^{(\leqslant 2)}] &= \{1,\; x,\; x',\; (x')^2,\; x'x'',\; x'',\; (x'')^2,\; (x'')^3\},\\
    \mathcal{F}_s \cap k[x^{(\leqslant 2)}] &= \{1, x', x'', (x'')^2\}.
  \end{align*}
  Using this, one can produce the monomials of order at most one in $\mathcal{F}_{1, 1}$ and $\mathcal{F}_{2, 0}$
  \begin{align*}
      \mathcal{F}_{1, 1} \cap k[x^{(\leqslant 1)}] &= \{1, x, xx', x', (x')^2, (x')^3\},\\
      \mathcal{F}_{2, 0} \cap k[x^{(\leqslant 1)}] &= \{1, x, x^2, xx', x(x')^2, x', (x')^2, (x')^3, (x')^4\}\qedhere
  \end{align*}
  For example, $(x')^3 \in \mathcal{F}_{1, 1}$ can be written as $(x')^2 \cdot x'$, where $(x')^2 \in \mathcal{F}$ and $ x' \in \mathcal{F}_s$.
  Likewise, for the monomials of order at most two, we can write:
  \begin{align*}
      \mathcal{F}_{1, 1} \cap k[x^{(\leqslant 2)}] = &\{1, x, x', x'', xx', xx'', (x')^2, x'x'', (x'')^2, x(x'')^2, (x')^3, (x')^2x'',\\ &x'(x'')^2, (x'')^3, (x')^2 (x'')^2, x'(x'')^3, (x'')^4, (x'')^5\}.
  \end{align*}
\end{example}


\section{Main results}\label{sec:main}

The algebra of regular functions on the arc space of a fat point $x^m = 0$ admits a natural filtration by subalgebras induced by the truncation of arcs.
Our first main result, Theorem~\ref{thm:dimension}, gives a simple formula for the dimension of the subalgebra induced by the truncation at order $h$.
Corollary~\ref{cor:gen_ser} gives the generating series for these dimensions (as in~\eqref{eq:main_result}).

\begin{theorem}\label{thm:dimension}
  Let $m$ and $h$ be positive integers and $k$ be a differential field of zero characteristic. Then
  \[
      \dim_k \bigl( k[x^{(\leqslant h)}] / (k[x^{(\leqslant h)}] \cap \langle x^m\rangle^{(\infty)}) \bigr) = m^{h + 1}.
  \]
\end{theorem}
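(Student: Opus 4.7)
The equality $\dim_k\bigl(k[x^{(\leqslant h)}] / (k[x^{(\leqslant h)}] \cap \langle x^m\rangle^{(\infty)})\bigr) = m^{h+1}$ is naturally proved as two matching inequalities, as laid out in~\eqref{eq:bounds}. My plan follows this split, with the central tool being the embedding of $k[x^{(\infty)}]/\mathcal{I}_m^{(\infty)}$ into a differential exterior algebra from~\cite{x2}. The point of that embedding is that questions about linear independence modulo $\mathcal{I}_m^{(\infty)}$ translate to determinantal computations on alternating tensors, which are far more tractable than wrestling with $\mathcal{I}_m^{(\infty)}$ itself.

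For the lower bound $\dim_k \geqslant m^{h+1}$, I would exhibit a set of $m^{h+1}$ ``standard monomials'' in $k[x^{(\leqslant h)}]$ modelled on Afsharijoo's conjecture from~\cite{Afsharijoo2021}, and then show they are linearly independent modulo the ideal. The sets $\mathcal{F}_{a,b}$ of Definition~\ref{def:fair} are the natural candidates: the examples there already indicate that $|\mathcal{F}\cap k[x^{(\leqslant 2)}]|=8=2^3$ and $|\mathcal{F}_{2,0}\cap k[x^{(\leqslant 1)}]|=9=3^2$, strongly suggesting that a suitable $\mathcal{F}_{a,b}$ (with $a+b$ tied to $m-1$) has cardinality exactly $m^{h+1}$ when intersected with $k[x^{(\leqslant h)}]$. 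The combinatorial step is to count this set, most cleanly via an explicit bijection to $\{0,1,\ldots,m-1\}^{h+1}$ recording the multiplicities of each $x^{(j)}$. The algebraic step is to send these monomials into the exterior algebra and verify that their images, expressible as wedges of alternating tensors, are $k$-linearly independent by a direct inspection of the supporting multi-indices.

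For the upper bound $\dim_k \leqslant m^{h+1}$, I would realize the image of $k[x^{(\leqslant h)}]/\mathcal{I}_m^{(\infty)}$ inside the exterior algebra as a flat deformation of an algebra that splits as a direct product of $h+1$ local rings, each of dimension $m$. Concretely, the truncated generators $x, x', \ldots, x^{(h)}$ map to $h+1$ alternating tensors built from shifted ``coordinate directions''; the hope is that a weight-filtration on these directions degenerates the subalgebra they generate into the tensor product $\bigl(k[y]/(y^m)\bigr)^{\otimes(h+1)}$, whose dimension is exactly $m^{h+1}$. Since flat deformations preserve dimension, this yields the matching upper bound.

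The main obstacle I anticipate is the upper bound: producing a concrete filtration or one-parameter family in the exterior algebra whose associated graded really does split off as the product $\bigl(k[y]/(y^m)\bigr)^{\otimes(h+1)}$, and verifying flatness. The lower bound, by contrast, should reduce once the candidate standard set is pinned down to combinatorial bookkeeping plus a clean multi-index argument in the exterior algebra, both of which are enabled by the machinery of~\cite{x2}.
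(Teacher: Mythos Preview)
Your overall architecture matches the paper's exactly: split into two inequalities, use the exterior-algebra embedding of~\cite{x2} for both, take Afsharijoo's conjectural standard monomials $\mathcal{F}_{m-1,0}$ for the lower bound, and degenerate to a product of $(h+1)$ copies of $k[y]/(y^m)$ for the upper bound. Two points, however, need correction.

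\textbf{The counting bijection is wrong.} The map ``record the multiplicity of each $x^{(j)}$'' does \emph{not} give a bijection $\mathcal{F}_{m-1,0}\cap k[x^{(\leqslant h)}]\to\{0,\ldots,m-1\}^{h+1}$. Already for $m=3$, $h=1$ the monomial $(x')^4$ lies in $\mathcal{F}_{2,0}$ (it is the product of the two fair monomials $(x')^2$ and $(x')^2$) while $x^2(x')^2$ does not (no decomposition into two fair factors exists). The set $\mathcal{F}_{m-1,0}$ is genuinely not the set of monomials with all exponents below $m$. The paper's count (Proposition~\ref{prop:count}) proceeds instead by induction on $m$: one first proves a non-overlapping decomposition lemma (Lemma~\ref{lem:nonoverlap}) isolating a maximal rightmost fair factor, then combines this with the single-factor count $|\{P\in\mathcal{F}:\deg P=d,\ \ord P\leqslant h\}|=\binom{h+1}{d}$ (Lemma~\ref{lem:m2}) and the binomial theorem. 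You will need something of this shape; the naive multiplicity map will not do.

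\textbf{Flatness is a red herring for the upper bound.} You do not need to verify flatness, and framing it that way makes the obstacle look harder than it is. The paper's mechanism (Lemma~\ref{lem:algebra_ineq}) is simply: if $u_0,\ldots,u_h$ generate an algebra $Y_h$ and $\gr(u_i)$ denotes the lowest-weight homogeneous component of $u_i$, then the algebra generated by $\gr(u_0),\ldots,\gr(u_h)$ has dimension at most $\dim Y_h$, because any linear dependence among monomials in the $\gr(u_i)$'s lifts to one among monomials in the $u_i$'s. The actual work is to manufacture elements $u_i\in\Lambda$ with $\gr(u_i)=\varphi(x^{(i)})$ such that the algebra they generate is visibly spanned by $m^{h+1}$ monomials; the paper does this by an explicit change of basis from $v_i:=(1+\partial)^i\otimes(1+\partial)^i\cdot\varphi(x)$, each of which satisfies $v_i^m=0$. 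No flatness enters.
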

\begin{corollary}\label{cor:gen_ser}
   Let $m$ be a positive integer and $k$ be a differential field of zero characteristic.
   Then
   \[
      \sum\limits_{\ell = 0}^\infty \dim_k(k[x^{(\leqslant \ell)}] / \langle x^m\rangle^{(\infty)})\cdot t^\ell = \frac{m}{1 - mt},
   \]
   where $k[x^{(\leqslant \ell)}] / \langle x^m\rangle^{(\infty)} := k[x^{(\leqslant \ell)}] / (k[x^{(\leqslant \ell)}] \cap \langle x^m\rangle^{(\infty)})$.
\end{corollary}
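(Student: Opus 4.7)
The plan is essentially a one-line computation: substitute the dimension formula from Theorem~\ref{thm:dimension} into the generating series and recognize a geometric series. Concretely, Theorem~\ref{thm:dimension} asserts that $\dim_k\bigl(k[x^{(\leqslant \ell)}] / \langle x^m\rangle^{(\infty)}\bigr) = m^{\ell+1}$ for every $\ell \geqslant 0$. Plugging this into the defining sum for $D_{\mathcal{I}_m}(t)$ gives
\[
    \sum_{\ell=0}^{\infty} \dim_k\bigl(k[x^{(\leqslant \ell)}] / \langle x^m\rangle^{(\infty)}\bigr)\, t^\ell \;=\; \sum_{\ell=0}^{\infty} m^{\ell+1}\, t^\ell \;=\; m \sum_{\ell=0}^{\infty} (mt)^\ell \;=\; \frac{m}{1-mt},
\]
where the final equality is the standard geometric series identity, interpreted as an equality in the formal power series ring $k[\![t]\!]$.

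There is no real obstacle to address and no auxiliary lemma to invoke: all of the mathematical content has been absorbed into Theorem~\ref{thm:dimension}. The only tacit verification is that $m\sum_{\ell\geqslant 0}(mt)^\ell = m/(1-mt)$ as formal power series, which is immediate from $(1-mt)\sum_{\ell\geqslant 0}(mt)^\ell = 1$. Thus the corollary is simply a power-series repackaging of the theorem, and the proof need not exceed two lines.
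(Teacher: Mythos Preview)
Your proposal is correct and matches the paper's approach: the paper does not even give a separate proof of this corollary, treating it as immediate from Theorem~\ref{thm:dimension}, which is exactly what you do by summing the geometric series.
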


Given a polynomial ideal and monomial ordering, the monomials which do not appear as leading terms of the elements of the ideal are called \emph{standard} monomials.
Motivated by applications to combinatorics, Afsharijoo used computations experiment to conjecture~\cite[Section 5]{Afsharijoo2021} a description of the standard monomials of $\langle x^m\rangle^{(\infty)}$ with respect to the degree lexicographic ordering.
Our second main result, Theorem~\ref{thm:non-initial}, gives such a description and, combined with Lemma~\ref{lem:nonoverlap}, establishes the conjecture.

\begin{theorem}\label{thm:non-initial}
    Let $k$ be a differential field of zero characteristic.
    Consider a degree lexicographic monomial ordering on $k[x^{(\infty)}]$ with the variables oredered as $x < x' < x'' < \ldots$.
    Let $m$ and $i$ be positive integers with $1 \leqslant i \leqslant m$.
    Then the set of standard monomials of the ideal $\langle x^i, (x^m)^{(\infty)} \rangle$ is $\mathcal{F}_{i - 1, m - i}$ (see Definition~\ref{def:fair}).
    Note that, for $i = m$, we obtain differential ideal $\langle x^m \rangle^{(\infty)}$.
\end{theorem}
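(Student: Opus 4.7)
The plan is to prove the two containments separately. Write $J_i := \langle x^i, (x^m)^{(\infty)}\rangle$. We must show (a) the monomials of $\mathcal{F}_{i-1, m-i}$ are linearly independent modulo $J_i$, and (b) every monomial outside $\mathcal{F}_{i-1, m-i}$ arises as the $\dlex$-leading monomial of some element of $J_i$.

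For (a), I would first settle the case $i = m$ via the embedding of $k[x^{(\infty)}]/\langle x^m\rangle^{(\infty)}$ into the differential exterior algebra of~\cite{x2} recalled in Section~\ref{sec:exterior}. Each monomial in $\mathcal{F}^{m-1}$ should come with a canonical factorization into $m-1$ fair blocks, whose images in the exterior algebra have distinguishable leading supports, forcing linear independence. For general $i < m$, I would use the inclusion $J_m \subseteq J_i$ to obtain the surjection $k[x^{(\leq h)}]/J_m \twoheadrightarrow k[x^{(\leq h)}]/J_i$, and identify its kernel combinatorially: a class $[\mu]$ with $\mu \in \mathcal{F}^{m-1}$ is killed by the additional relation coming from $x^i$ precisely when every fair factorization of $\mu$ contains at least $i$ non-strongly-fair blocks, i.e., when $\mu \notin \mathcal{F}^{i-1}\cdot\mathcal{F}_s^{m-i}$.

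For (b), I would proceed by induction on $\nu$ in the $\dlex$ ordering and construct an explicit witness $f_\nu \in J_i$ with $\init_{\dlex}(f_\nu) = \nu$ for each $\nu \notin \mathcal{F}_{i-1, m-i}$. If $x^i \mid \nu$, take $f_\nu = (\nu / x^i) \cdot x^i$. Otherwise, the obstruction $\nu \notin \mathcal{F}^{i-1}\cdot\mathcal{F}_s^{m-i}$ combined with $x^i \nmid \nu$ should, via a pigeonhole/matching argument, single out derivatives $(x^m)^{(j)}$ and monomial cofactors $\mu_j$ such that a suitable linear combination $\sum_j c_j\, \mu_j \cdot (x^m)^{(j)} \bmod \langle x^i\rangle$ has leading monomial $\nu$, with the remaining tail terms (below $\nu$ in $\dlex$) absorbed via the inductive hypothesis.

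The main obstacle is the combinatorial bookkeeping needed for both directions: tracking the effect of the extra relation $x^i$ on canonical fair factorizations in (a), and converting the abstract failure ``$\nu \notin \mathcal{F}_{i-1, m-i}$'' into a concrete combination of derivatives and cofactors in (b). I expect both to rely on a Hall-style matching between the positive-order factors of $\nu$ and the $m-1$ block positions of a hypothetical fair decomposition, and (b) to require a bivariate induction invariant like $(\deg \nu, \ord \nu)$ so that the trailing terms of the $(x^m)^{(j)}$ fall under the inductive hypothesis.
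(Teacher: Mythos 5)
Your plan splits the theorem into a lower bound (linear independence of $\mathcal{F}_{i-1,m-i}$) and an upper bound (every other monomial is a leading term). The paper carries out the lower bound, but handles the upper bound by an entirely different and much lighter mechanism: a dimension count. It never constructs leading-term witnesses for non-fair monomials, and this is not a stylistic choice --- it is what makes the proof tractable, since the lexicographic Gr\"obner basis of $\langle x^m\rangle^{(\infty)}$ is known to be differentially infinite (Zobnin, Afsharijoo--Mourtada). Concretely, the paper shows \emph{(i)} the elements of $\mathcal{F}_{i-1,m-i}$ are linearly independent modulo $\langle x^i,(x^m)^{(\infty)}\rangle$ (Corollary~\ref{cor:linear_independence}), \emph{(ii)} $\dim_k k[x^{(\leqslant h)}]/\langle x^i,(x^m)^{(\infty)}\rangle \leqslant i\cdot m^h$ (Corollary~\ref{cor:dim_bound}, proved by a graded deformation inside the exterior algebra), and \emph{(iii)} $|\mathcal{F}_{i-1,m-i}\cap k[x^{(\leqslant h)}]|=i\cdot m^h$ (Proposition~\ref{prop:count}). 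The lower bound from (i) matches the upper bound from (ii), so (i) must give \emph{all} the standard monomials, and step (b) of your plan is simply unnecessary.

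Your step (b) as proposed has a genuine gap: constructing, for each $\nu\notin\mathcal F_{i-1,m-i}$ with $x^i\nmid\nu$, an element $\sum_j c_j\,\mu_j\,(x^m)^{(j)}\equiv f_\nu\pmod{\langle x^i\rangle}$ with $\init_\dlex f_\nu=\nu$ is essentially the same difficulty as writing down the Gr\"obner basis, and neither the ``Hall-style matching'' nor the bivariate induction you gesture at resolves the combinatorics; as stated this direction is not a proof but a programme. Your step (a) also has an issue for $i<m$: you propose to use the surjection $k[x^{(\leqslant h)}]/J_m\twoheadrightarrow k[x^{(\leqslant h)}]/J_i$ and to ``identify its kernel combinatorially'' as the span of the classes $[\mu]$ for $\mu\in\mathcal F^{m-1}\setminus\mathcal F^{i-1}\mathcal F_s^{m-i}$. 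But knowing that this set spans the kernel is \emph{equivalent} to knowing $\dim k[x^{(\leqslant h)}]/J_i$, i.e.\ to the statement you are trying to prove, so without an independent argument this is circular. The paper avoids the circularity by proving linear independence from scratch: Proposition~\ref{prop:m2} handles the $m=2$ case via the exterior-algebra embedding, and Proposition~\ref{prop:indep} bootstraps to general $m$ by substituting $x\mapsto y_1+\cdots+y_{m-1}$ with each $y_j$ subject to $y_j^2=0$ differentially and reading off leading supports via Lemma~\ref{Buch}. Your sketch for $i=m$ (blocks with ``distinguishable leading supports'' under $\varphi$) is in the right spirit but does not capture this reduction to the quadratic case, which is what makes the bookkeeping finite.
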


\begin{corollary}\label{cor:non-initial}
    Theorem~\ref{thm:non-initial} also holds for the following orderings:
    \begin{itemize}
        \item \emph{purely lexicorgraphic} with the variables ordered as in Theorem~\ref{thm:non-initial};
        \item \emph{wieghted lexicographic}: monomials are first compared by the sum of the orders and then lexicographically as in Theorem~\ref{thm:non-initial}.
    \end{itemize}
\end{corollary}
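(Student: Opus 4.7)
The plan is to reduce Corollary~\ref{cor:non-initial} to Theorem~\ref{thm:non-initial} by exploiting a bigraded structure on the ideal $J := \langle x^i, (x^m)^{(\infty)} \rangle$. I equip $k[x^{(\infty)}]$ with two $\Znn$-gradings: the \emph{polynomial degree}, in which each $x^{(j)}$ has degree $1$, and the \emph{weight}, in which each $x^{(j)}$ has weight $j$. The generators $x^i$ and $(x^m)^{(n)}$ have bidegrees $(i, 0)$ and $(m, n)$ respectively, so $J$ is a bihomogeneous ideal.

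The key observation is that, on any bihomogeneous polynomial $f \in k[x^{(\infty)}]$, all three orderings in question select the same leading monomial. Indeed, every monomial in the support of $f$ shares a common polynomial degree and a common weight, so the degree tiebreaker in $\dlex$ and the weight tiebreaker in $\wlex$ are both ties on the support of $f$, and all three orderings reduce to the pure lexicographic comparison with $x < x' < x'' < \cdots$.

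To apply this, I would refine the implicit $\dlex$ Gr\"obner basis of $J$ furnished by Theorem~\ref{thm:non-initial} to a bihomogeneous one. Given any $g \in J$ with $\lm_{\dlex}(g) = u$, decompose $g = \sum g_{d,w}$ into its bihomogeneous components; each $g_{d,w}$ lies in $J$ since $J$ is bigraded. If $u$ has bidegree $(d_0, w_0)$, then $u$ is a monomial of $g_{d_0, w_0}$, and among the monomials of $g$ of degree $d_0$ it is lex-largest (because $\lm_{\dlex}(g) = u$), so $\lm_{\dlex}(g_{d_0, w_0}) = u$ as well. Replacing each basis element by a suitable bihomogeneous component yields a bihomogeneous Gr\"obner basis $G \subset J$ under $\dlex$ whose set of leading monomials is the complement of $\mathcal{F}_{i-1, m-i}$. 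By the previous paragraph, $G$ has the same leading monomials under $\lex$ and under $\wlex$, which gives the inclusions $\init_{\lex}(J) \supseteq \init_{\dlex}(J)$ and $\init_{\wlex}(J) \supseteq \init_{\dlex}(J)$.

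The reverse inclusion follows from a dimension count: by Theorem~\ref{thm:non-initial}, $\mathcal{F}_{i-1, m-i}$ is a $k$-basis of $k[x^{(\infty)}]/J$, while the standard monomials under $\lex$ (respectively $\wlex$) also form a basis and are contained in $\mathcal{F}_{i-1, m-i}$ by the inclusions just established, so the two bases must coincide. The only conceptual step is the bihomogeneity observation that makes all three orderings agree on the chosen Gr\"obner basis; once this is in hand no further input beyond Theorem~\ref{thm:non-initial} is needed.
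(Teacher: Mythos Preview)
Your proposal is correct and takes essentially the same approach as the paper. The paper's proof is a one-liner: it simply notes that $\langle x^i, (x^m)^{(\infty)} \rangle$ is generated by polynomials that are simultaneously homogeneous and isobaric, and invokes the standard fact that for such bigraded ideals the Gr\"obner bases with respect to $\lex$, $\dlex$, and $\wlex$ coincide --- exactly the bihomogeneity observation you spell out in detail.
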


\begin{remark}
The multiplicity of the scheme of polynomial arcs of degree less than $h$ of $x = 0$ (defined by $\langle x^m, x^{(h)} \rangle^{(\infty)}$) has been studied in~\cite{RidaAnnaLaura}.
It was shown~\cite[Theorem~2.5]{RidaAnnaLaura} that this multiplicity (equal to $\dim_k k [ x^{( \infty)}] / \langle x^m, x^{(h)} \rangle^{(\infty)}$) is a polynomial in $m$ of degree $h$ which is the Erhart polynomial of some lattice polytope.
Theorem~\ref{thm:dimension} together with a natural surjective morphism $k[x^{(< h)}] / \langle x^m\rangle^{(\infty)} \to k [ x^{( \infty)}] / \langle x^m, x^{(h)} \rangle^{(\infty)}$ implies that this polynomial is bounded by $m^h$.
\end{remark}


\section{Proofs}\label{sec:proofs}

\subsection{Key technical tool: embedding into exterior algebra}\label{sec:exterior}

\begin{notation}
  Let $k$ be a field.
  Then, for $\bm{\xi} = (\xi_0, \xi_1, \ldots, \xi_n)$ we introduce a countable collection of symbols $\{\xi_i^{(j)} \mid 0 \leqslant i \leqslant n, \; j \geqslant 0\}$ and by $\Lambda_k(\bm{\xi}^{(\infty)})$ denote the exterior algebra of a $k$-vector space spanned by these symbols.
  $\Lambda_k(\bm{\xi}^{(\infty)})$ is equipped with a structure of a (noncommutative) differential algebra by 
  \[
    (\xi_j^{(i)})' := \xi_j^{(i + 1)} \quad \text{for every } i\geqslant 0 \text{ and } 0 \leqslant j \leqslant n. 
  \]
\end{notation}

The following proposition is a minor modification of~\cite[Lemma~1]{x2}.
The proof we will give is a simplification of the proof of~\cite[Lemma~1]{x2} which will be extended to a proof of Lemma~\ref{lem:ext_extra}.

\begin{proposition}\label{prop:ext_embedding}
  Let $m$ be a positive integer, and consider tuples $\bm{\eta} = (\eta_0, \ldots, \eta_{m - 2})$ and $\bm{\xi} = (\xi_0, \ldots, \xi_{m - 2})$.
  Let 
  \[
  \Lambda := \Lambda_k(\bm{\eta}^{(\infty)}) \otimes \Lambda_k(\bm{\xi}^{(\infty)}),
  \]
  which is equipped with a structure of differential algebra (as a tensor product of differential algebras, using the Leibnitz rule, that is $(a \otimes b)' := a'\otimes b + a \otimes b'$).
  Consider a differential homomorphism $\varphi \colon k[x^{(\infty)}] \to \Lambda$ defined by
  \[
    \varphi(x) := \sum\limits_{i = 0}^{m - 2} \eta_i \otimes \xi_i.
  \]
  Then the kernel of $\varphi$ is $\langle x^m \rangle^{(\infty)}$.
\end{proposition}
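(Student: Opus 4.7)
The plan is to establish the two inclusions $\langle x^m\rangle^{(\infty)} \subseteq \ker\varphi$ and $\ker\varphi \subseteq \langle x^m\rangle^{(\infty)}$ separately.

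For the forward inclusion, since $\varphi$ is a differential homomorphism and $\langle x^m\rangle^{(\infty)}$ is the minimal differential ideal containing $x^m$, it is enough to check that $\varphi(x^m) = 0$. Write $e_i := \eta_i \otimes \xi_i$ for $0 \leqslant i \leqslant m - 2$. A short sign computation in the $\mathbb{Z}/2$-graded tensor product $\Lambda$ shows that the $e_i$ pairwise commute (the antisymmetry signs from $\eta_i\eta_j = -\eta_j\eta_i$ and $\xi_i\xi_j = -\xi_j\xi_i$ in the two factors multiply to $+1$) and that each $e_i$ squares to zero (because $\eta_i \wedge \eta_i = 0$). Expanding $\varphi(x)^m = \bigl(\sum_{i=0}^{m-2} e_i\bigr)^m$ by the multinomial theorem gives a sum of terms $\prod_i e_i^{a_i}$ with $\sum_i a_i = m$; since only $m - 1$ indices are available, pigeonhole forces some $a_i \geqslant 2$, so every term vanishes.

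For the reverse inclusion, the goal is to show that the induced map $\bar\varphi \colon A := k[x^{(\infty)}]/\langle x^m\rangle^{(\infty)} \to \Lambda$ is injective. The plan is to exhibit a spanning set $T$ of $A$ whose image in $\Lambda$ is linearly independent. To detect independence I would bi-grade $\Lambda$ by total $\eta$-order and total $\xi$-order (so $\eta_i^{(a)} \otimes \xi_i^{(b)}$ has bi-degree $(a,b)$). Under this bi-grading, $\varphi(x^{(j)}) = \sum_{a+b=j}\binom{j}{a}\sum_i \eta_i^{(a)} \otimes \xi_i^{(b)}$ splits into bi-homogeneous pieces supported on $a + b = j$, enabling a lex-style leading-term analysis. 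For a low-degree monomial $x^{(j_1)}\cdots x^{(j_d)}$ with $d \leqslant m-1$, using the leading piece $\sum_i \eta_i^{(j)} \otimes \xi_i$ of $\varphi(x^{(j)})$ and expanding the product, the leading term of the image is a signed sum over injections $\{1,\ldots,d\}\hookrightarrow\{0,\ldots,m-2\}$ of pure tensors $\pm \eta_{i_1}^{(j_1)}\cdots\eta_{i_d}^{(j_d)}\otimes\xi_{i_1}\cdots\xi_{i_d}$, which are supported on disjoint basis elements of $\Lambda$ as the multiset $\{j_1,\ldots,j_d\}$ varies, hence are linearly independent.

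The main technical obstacle is that monomials of $k$-degree $d \geqslant m$ have vanishing leading term under the grading above. I would handle these by allowing each factor $\varphi(x^{(j_r)})$ to contribute either via its top-$\eta$ summand $\eta_i^{(j_r)} \otimes \xi_i$ or via its top-$\xi$ summand $\eta_i \otimes \xi_i^{(j_r)}$; an assignment of $\eta/\xi$ choices to the $d$ factors, together with injective index labellings on each side, produces distinct pure tensors in $\Lambda$ provided no order appears among the $j_r$ with multiplicity exceeding $2(m-1)$. The principal challenge is then combinatorial: identifying a concrete spanning set $T$ of $A$ directly --- without circular appeal to Corollary~\ref{cor:gen_ser} or Theorem~\ref{thm:non-initial} --- and matching its elements bijectively with linearly independent pure tensors in $\Lambda$ via such assignments.
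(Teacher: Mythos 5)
Your forward inclusion is correct and is essentially the paper's (the paper just notes $(\varphi(x))^m = 0$; your explicit commuting-square-zero-elements/pigeonhole account is fine). The reverse inclusion, however, has a genuine gap that you yourself flag: you never produce the spanning set $T$, the rule assigning $\eta/\xi$ roles and indices to the factors, or the proof that the resulting pure tensors are distinct across monomials. Those are not technicalities — they are the entire content of the reverse inclusion.

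The paper closes exactly this gap with two ideas you are missing. First, it does not try to construct a spanning set of $A$ \emph{ab initio}; it introduces a specific weighted degree inverse lexicographic order $\prec$ on $k[x^{(\infty)}]$ and observes that the $\prec$-leading monomial of $(x^m)^{(h)}$ is $(x^{(q)})^{m-r}(x^{(q+1)})^r$ with $h = qm + r$. Reducing any $Q \in \ker\varphi$ by $x^m, (x^m)', (x^m)'', \ldots$ with respect to $\prec$ (these reducers are trivially in $\langle x^m\rangle^{(\infty)}$) lets one assume all monomials of $Q$ lie in the set $\mathcal{M}$ of monomials not divisible by any $(x^{(q)})^{m-r}(x^{(q+1)})^r$; this yields the spanning set for free without any circularity. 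Second, the witness map $\psi$ is not a binary ``top-$\eta$ vs.\ top-$\xi$'' choice. For $P = x^{(h_0)}\cdots x^{(h_\ell)} \in \mathcal{M}$ with $h_0 \leqslant \cdots \leqslant h_\ell$, the paper sets
\[
\psi(P) = \prod_{i=0}^{\ell}\Bigl(\eta_{i\%(m-1)}^{([i/(m-1)])} \otimes \xi_{i\%(m-1)}^{(h_i - [i/(m-1)])}\Bigr),
\]
so the $\eta$-derivative order and the index cycle together through the positions, and the $\xi$-derivative absorbs the remainder. The characterization $h_{i+m-1} > h_i + 1$ of $\mathcal{M}$ is precisely what guarantees that all $\eta$-factors and all $\xi$-factors in $\psi(P)$ are distinct, hence $\psi(P) \neq 0$, and a descending induction on the largest order shows $\psi(P)$ appears in $\varphi(P)$ but in no $\varphi(P_0)$ with $P_0 \prec P$, $P_0 \in \mathcal{M}$. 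Your sketch, with each factor contributing only a pure $\eta$-derivative or a pure $\xi$-derivative, cannot reproduce this: once an order repeats more than twice among the $h_i$'s (which happens in $\mathcal{M}$ for $m \geqslant 4$), the binary choice plus $m - 1$ index labels does not by itself produce a canonical injective tagging, and your condition ``multiplicity exceeding $2(m-1)$'' is not the right bound ($\mathcal{M}$ in fact bounds repeats by $m-1$). In short, the overall shape of your plan — find a spanning set, exhibit linearly independent leading tensors — is the right shape, but the two concrete constructions that make it work are absent.
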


\begin{example}
  Consider the case $m = 3$.
  Then we will have
  \[
      \varphi(x) = \eta_0 \otimes \xi_0 + \eta_1 \otimes \xi_1.
  \]
  The image of $x'$ will be then
  \[
      \varphi(x') = (\varphi(x))' = \eta_0' \otimes \xi_0 + \eta_0 \otimes \xi_0' + \eta_1' \otimes \xi_1 + \eta_1 \otimes \xi_1'.
  \]
  One can show, for example, that $(x')^4 \not\in \langle x^3\rangle^{(\infty)}$ by showing that $\varphi((x')^4) \neq 0$:
  \[
    \varphi((x')^4) = 24 (\eta_0 \wedge \eta_0' \wedge \eta_1 \wedge \eta_1') \otimes (\xi_0\wedge \xi_0' \wedge \xi_1 \wedge \xi_1') \neq 0.
  \]
  Furthermore, a direct computation shows that $\varphi((x')^5) = 0$.
  Combined with Proposition~\ref{prop:ext_embedding}, this implies $(x')^5 \in \langle x^3 \rangle^{(\infty)}$.
\end{example}

\begin{proof}[Proof of Proposition~\ref{prop:ext_embedding}]
  Consider $(\varphi(x))^m$. This is a sum of tensor products of exterior polynomials of degree $m$ in $m - 1$ variables, so it must be zero.
  Since $(\varphi(x))^m = 0$ and $\varphi$ is a differential homomrphism, we conclude that $\operatorname{Ker}\varphi \supset \langle x^m \rangle^{(\infty)}$.
  
  Now we will prove the inverse inclusion.
  We define the weighted degree inverse lexicographic ordering $\prec$ on $k[x^{(\infty)}]$ (cf.~\cite[p. 524]{Zobnin2009}): $M \prec N$ if an only if
  \begin{itemize}[itemsep=0pt]
      \item $\tord M < \tord N$, where $\tord$ is defined as the sum of the orders, or
      \item $\tord M = \tord N$ and $\deg M < \deg N$ or
      \item $\tord M = \tord N$, $\deg M = \deg N$, and $N$ is lexicographically \emph{lower} than $M$, where the variables are ordered $x < x' < x'' < \ldots$.
  \end{itemize}
  For example, we will have $x \prec x' \prec x'' \prec \ldots$ and $xx'' \prec (x')^2$.
  Then, for every $h \geqslant 0$, the leading monomial of $(x^m)^{(h)}$ with respect to $\prec$ is $(x^{(q)})^{m - r} (x^{(q + 1)})^r$, where $q$ and $r$ are the quotient and the reminder of the integer division of $h$ by $m$, respectively.
  Let $\mathcal{M}$ be the set of all monomials not divisible by any monomial of the form $(x^{(q)})^{m - r} (x^{(q + 1)})^r$.
  Then we can characterize $\mathcal{M}$ as follows:
  \[
    \mathcal{M} = \{x^{(h_0)} \ldots x^{(h_\ell)} \mid h_0 \leqslant \ldots \leqslant h_\ell, \; \forall\, 0\leqslant i \leqslant \ell - m + 1 \colon h_{i + m - 1} > h_i + 1\}.
  \]
  We will define a linear map $\psi$ from $\mathcal{M}$ to the set of monomials in $\Lambda$ with the following properties:
  \begin{enumerate}[label=(P\arabic*)]
      \item\label{prop:nonzero} \emph{For every $P \in \mathcal{M}$, $\psi(P) \neq 0$.}
      
      \item\label{prop:ord_prev} \emph{For every $P \in \mathcal{M}$, the monomial $\psi(P)$ appears in the polynomial $\varphi(P)$ and, for any $P_0 \in \mathcal{M}$ such that $P_0 \prec P$, $\psi(P)$ does not appear in the polynomial $\varphi(P_0)$.}
  \end{enumerate}
  Informally speaking, $\psi(M)$ is the ``leading monomial'' in $\varphi(M)$.
  Once such a map $\psi$ has been defined, we can prove the proposition as follows.
  Let $Q \in \operatorname{Ker}\varphi \setminus \langle x^m \rangle^{(\infty)}$.
  By replacing $Q$ with the result of the reduction of $Q$ by $x^m, (x^m)', \ldots$ with respect to $\prec$, we can further assume that all the monomials in $Q$ belong to $\mathcal{M}$\footnote{Interestingly, although it is known that $x^m, (x^m)', \ldots$ form a Gr\"obner basis, we do not really need to use this fact here since a reduction with respect to any set of polynomials is well-defined.}.
  Let $Q_0$ be the largest of them. 
  By~\ref{prop:nonzero} and~\ref{prop:ord_prev}, $\varphi(Q_0)$ will involve $\psi(Q_0)$ and $\varphi(Q - Q_0)$ will not, so $\varphi(Q) \neq 0$.
  This contradicts the assumption that $Q \in \operatorname{Ker}\varphi$. The proposition is proved.
  
  Therefore, it remains to define $\psi$ satisfying~\ref{prop:nonzero} and~\ref{prop:ord_prev}.
  We will start with the case $m = 2$ to show the main idea while keeping the notation simple.
  We define $\psi$ by
  \begin{equation}\label{eq:def_psi2}
    \psi(x^{(h_0)} \ldots x^{(h_\ell)}) :=  \left( \eta^{(0)} \otimes \xi^{(h_0)} \right) \wedge \left( \eta^{( 1)} \otimes \xi^{(h_1 - 1)} \right) \wedge \cdots \wedge \left( \eta^{(\ell)} \otimes \xi^{(h_\ell - \ell)} \right),
  \end{equation}
  where $h_0 \leqslant h_1 \leqslant \ldots \leqslant h_\ell$.
  For proving~\ref{prop:nonzero}, we observe that, if $h_{i + 1} > h_i + 1$ for all $i$, then $h_0 < h_1 - 1 < h_2 -2 < \ldots < h_\ell - \ell$, so there are no coinciding $\xi$'s in~\eqref{eq:def_psi2}.
  The construction for arbitrary $m$ will consist of splitting the monomial into $m - 1$ interlacing submonomials and applying~\eqref{eq:def_psi2} with $(\eta_i, \xi_i)$ to $i$-th submonomial. 
  More formally, if $h_0 \leqslant h_1 \leqslant \ldots  \leqslant h_\ell$, we define
  \begin{equation}\label{eq:def_psi}
    \psi(x^{(h_0)} \ldots x^{(h_\ell)}) := \prod\limits_{i = 0}^\ell \left( \eta_{i \,\%\, (m - 1)}^{([i / (m - 1)])} \otimes \xi_{i \,\%\, (m - 1)}^{(h_i - [i / (m - 1)])} \right),
  \end{equation}
  where $a \;\%\; b$ denotes the remainder of the division of $a$ by $b$, and $[\alpha]$ denotes the integer part of $\alpha$.
  \ref{prop:nonzero} is proved by applying~\ref{prop:nonzero} for $m = 2$ to each submonomial.
   
  For proving~\ref{prop:ord_prev}, 
  consider $P_0 \in \mathcal{M}$ with $P_0 \preceq P$ and $\psi(P)$ appearing in $\varphi(P_0)$.
  Since $\psi$ preserves the total orders and doubles the degrees, we have $\tord P_0 = \tord P$ and $\deg P_0 = \deg P$.
  Let $H := \ord P_0$.
  Since $P_0 \preceq P$, we have $H \geqslant h_\ell$.
  Since the maximal orders of $\eta$ and $\xi$ in $\psi(P)$ do not exceed $[\ell / (m - 1)]$ and  $h_\ell - [\ell / (m - 1)]$, respectively, we have $H \leqslant h_\ell$.
  Thus, $H = h_\ell$.
  Applying the same argument recursively to $P / x^{(h_\ell)}$ and $P_0 / x^{(h_\ell)}$, we conclude that $P = P_0$.
      
   We will prove that $\varphi(P)$ involves $\psi(P)$ by induction on $\deg P$.
   The case $\deg P = 0$ is clear.
   Consider $P$ with $\deg P > 0$.
   Similarly to the preceding argument,
   one can obtain $\psi(P)$ (from $\psi(P/ x^{(\ell)})$) only by taking $\eta_{\ell \,\%\, (m - 1)}^{([\ell / (m - 1)])} \otimes \xi_{\ell \,\%\, (m - 1)}^{(h_\ell - [\ell / (m - 1)])}$ (i.e., the last term in~\eqref{eq:def_psi}) from one of the occurrences of $x^{(h_\ell)}$ in $P$. 
   Therefore, the coefficient in front of $\psi(P)$ in $\varphi(P)$ will be up to sign equal to $\deg_{x^{(h_\ell)}}$ times the coefficient in front of $\psi(P/x^{(h_\ell)})$ in $\varphi(P/x^{(h_\ell)}).$
   The latter is nonzero by the induction hypothesis.
   \end{proof}
  
\begin{lemma}\label{lem:ext_extra}
  In the notation of Proposition~\ref{prop:ext_embedding}, let $1 \leqslant r < m$.
  Then the preimage of the ideal in $\Lambda$ generated by $\eta_{r-1} \otimes \xi_{r-1},  \ldots, \eta_{m  - 2} \otimes \xi_{m  - 2}$ under $\varphi$ is equal to $\langle (x^m)^{(\infty)}, x^{r} \rangle$.
\end{lemma}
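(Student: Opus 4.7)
The plan is to follow the blueprint of the proof of Proposition~\ref{prop:ext_embedding}, inserting two new ingredients: a verification that $\varphi(x^r)$ lies in the ideal $J \subset \Lambda$ of the statement, and a combinatorial criterion pinning down exactly when a basis monomial $\psi(Q)$ of $\Lambda$ belongs to $J$.

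For the easy inclusion $\langle (x^m)^{(\infty)}, x^r\rangle \subset \varphi^{-1}(J)$, the part $(x^m)^{(\infty)} \subset \operatorname{Ker}\varphi$ is already Proposition~\ref{prop:ext_embedding}, so only $\varphi(x^r) \in J$ requires proof. The elements $\eta_i \otimes \xi_i$ have total degree two in $\Lambda$, hence commute pairwise, and $(\eta_i \otimes \xi_i)^2 = 0$. Multinomial expansion therefore gives
\[
   \varphi(x^r) = \Big(\sum_{i=0}^{m-2}\eta_i\otimes\xi_i\Big)^{r} = r!\sum_{\substack{S\subset\{0,\dots,m-2\}\\|S|=r}}\;\prod_{i\in S}(\eta_i\otimes\xi_i),
\]
and by pigeonhole every such subset $S$ intersects $\{r-1,\dots,m-2\}$, so each summand is, up to sign, divisible by some $\eta_j \otimes \xi_j$ with $j \geqslant r-1$ and thus lies in $J$.

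For the reverse inclusion, take $P \in \varphi^{-1}(J)$ and reduce modulo the generators of $\langle (x^m)^{(\infty)}, x^r\rangle$ using the ordering $\prec$ from the proof of Proposition~\ref{prop:ext_embedding}. The relevant leading monomials are $(x^{(q)})^{m-s}(x^{(q+1)})^s$ (from the derivatives of $x^m$) and $x^r$ itself, so the reduction produces $P = P_0 + R$ with $P_0 \in \langle (x^m)^{(\infty)}, x^r\rangle$ and $R$ supported on
\[
  \mathcal{M}_r \;:=\; \{\,Q \in \mathcal{M} \mid x^r \nmid Q\,\} \;=\; \{\,x^{(h_0)}\cdots x^{(h_\ell)} \in \mathcal{M} \mid \ell < r-1 \text{ or } h_{r-1} \geqslant 1\,\}.
\]
Assume for contradiction that $R \neq 0$, and let $R_0$ be its $\prec$-largest monomial. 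Exactly as in the proof of Proposition~\ref{prop:ext_embedding} (property (2) of $\psi$), the basis monomial $\psi(R_0)$ appears with nonzero coefficient in $\varphi(R)$. The ideal $J$ is spanned over $k$ by those basis monomials $\eta_A \otimes \xi_B$ of $\Lambda$ for which some $j \in \{r-1,\dots,m-2\}$ satisfies $\eta_j \in A$ and $\xi_j \in B$ (both in derivative order zero), so $\varphi(R) \in J$ forces $\psi(R_0) \in J$.

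The final step, which I expect to be the main obstacle, is to show this contradicts $R_0 \in \mathcal{M}_r$. Writing $R_0 = x^{(h_0)}\cdots x^{(h_\ell)}$ with $h_0 \leqslant \cdots \leqslant h_\ell$, formula~\eqref{eq:def_psi} shows that $\eta_j$ with no derivative occurs in $\psi(R_0)$ only when $j \leqslant \ell$, and that $\xi_j$ with no derivative occurs precisely when $h_{j+k(m-1)} = k$ for some $k \geqslant 0$ with $j+k(m-1) \leqslant \ell$. If $\ell < r-1$, no $j \in \{r-1,\dots,m-2\}$ satisfies $j \leqslant \ell$, so $\psi(R_0) \notin J$. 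Otherwise $R_0 \in \mathcal{M}_r$ gives $h_{r-1} \geqslant 1$, hence $h_j \geqslant 1$ for each $j \geqslant r-1$, and iterating the $\mathcal{M}$-defining inequality $h_{i+m-1} \geqslant h_i + 2$ yields $h_{j+k(m-1)} \geqslant h_j + 2k \geqslant 1+2k > k$, again contradicting $h_{j+k(m-1)} = k$. Hence $R = 0$ and $P \in \langle (x^m)^{(\infty)}, x^r\rangle$. The delicate part is to simultaneously isolate the right combinatorial criterion for $\psi(Q) \in J$ and to weave together the $\mathcal{M}$-inequalities with the nondivisibility $x^r \nmid R_0$ so that every case yields a contradiction.
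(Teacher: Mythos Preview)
Your proof is correct and follows essentially the same route as the paper's: both establish $\varphi(x^r)\in J$ by pigeonhole on the $r$-fold products of the $\eta_i\otimes\xi_i$, then reduce an arbitrary preimage modulo $\langle (x^m)^{(\infty)}, x^r\rangle$ to a polynomial supported on $\mathcal{M}_r$, and finish by showing $\psi$ of the leading monomial cannot lie in $J$ using the inequality $h_{j+k(m-1)}\geqslant h_j+2k$ coming from the definition of $\mathcal{M}$. Your write-up is in fact more explicit than the paper's (you spell out the basis-monomial description of $J$ and separate the cases $\ell<r-1$ and $\ell\geqslant r-1$), but the argument is the same.
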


\begin{proof}
We first prove that the image of $x^r$ belongs to $\langle \eta_{r-1} \otimes \xi_{r - 1},  \ldots, \eta_{m  - 2} \otimes \xi_{m  - 2} \rangle$. This is because $\varphi( x^r )$ is the sum of monomials which are products of $r$ different $\eta_i \otimes \xi_i$'s. Since there are $m-1$ of them, every such monomial will involve at least one of the last $m-r$ of $\eta_i \otimes \xi_i$'s.

Let us consider a polynomial $g \in k[x^{(\infty)}] \setminus \langle (x^m)^{ (\infty)}, x^r \rangle $ and prove that $\varphi(g)$ does not belong to $\langle \eta_{r - 1} \otimes \xi_{r - 1},  \ldots, \eta_{m  - 2} \otimes \xi_{m  - 2} \rangle$. 
We can assume that each monomial $P$ of $g$ belongs to 
 \[
    \mathcal{M}_r = \{M \in \mathcal{M} \mid \deg_x M < r \text{ or } 0 < h_{r - 1}\}.
  \]
We will use the map $\psi$ defined in \eqref{eq:def_psi}. 
In fact, $\psi(P)$ does not involve the zero-order derivatives of $\xi_{r - 1}, \ldots, \xi_{m - 2}$ since $h_i - [i / (m - 1)]$ can only be zero for a monomial in $\mathcal{M}$ only if $i \leqslant r - 2$.
Thus
\[
\psi(P) \not\in \langle \eta_{r - 1} \otimes \xi_{r - 1},  \ldots, \eta_{m - 2} \otimes \xi_{m - 2} \rangle.
\]

Assume that $P_0$ is the largest summand that appears in $g$.
Then $\varphi(P_0)$ involves $\psi(P_0)$ but $\varphi(g - P_0)$ does not. 
Therefore, $\varphi(g)$ does not belong to $\langle \eta_{r - 1} \otimes \xi_{r - 1},  \ldots, \eta_{m - 2} \otimes \xi_{m - 2} \rangle$.
\end{proof}


\subsection{Upper bounds for the dimension}\label{sec:upper}

Throughout the section, we fix a differential field $k$ of zero characteristic.

\begin{proposition}\label{prop:dim_bound}
  Let $m, h$ be positive integers.
  By $A_{m, h}$ we denote the subalgebra of $k[x^{(\infty)}] / \langle x^m\rangle^{(\infty)}$ generated by the images of $x, x', \ldots, x^{(h)}$.
  Then
  \[
    \dim A_{m, h} \leqslant m^{h + 1}.
  \]
\end{proposition}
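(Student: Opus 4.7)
My plan is to exploit the embedding $\varphi$ from Proposition~\ref{prop:ext_embedding}, which identifies $A_{m,h}$ with the subalgebra of $\Lambda$ generated by $\varphi(x), \varphi(x'), \ldots, \varphi(x^{(h)})$, and to realize this subalgebra as a deformation of an algebra that manifestly splits as a product of $h+1$ copies of $k[y]/\langle y^m\rangle$. Concretely, I would enlarge $\Lambda$ to a bigger exterior algebra $\hat\Lambda$ by adjoining fresh generators $\tilde\eta_{i,j}$, $\tilde\xi_{i,j}$ for $i \in \{0, \ldots, m-2\}$ and $j \in \{0, \ldots, h\}$, and introduce the (non-differential) ring homomorphism
\[
    \tilde\varphi \colon k[x^{(\leqslant h)}] \to \hat\Lambda, \qquad \tilde\varphi(x^{(j)}) := \sum_{i=0}^{m-2} \tilde\eta_{i,j} \otimes \tilde\xi_{i,j}.
\]
The same alternating-sum calculation used to prove $\varphi(x)^m = 0$ in Proposition~\ref{prop:ext_embedding} yields $\tilde\varphi(x^{(j)})^m = 0$. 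Since distinct values of $j$ involve disjoint sets of generators, the elements $\tilde\varphi(x^{(j)})$ commute pairwise and satisfy no further relations, so the image of $\tilde\varphi$ is isomorphic to $\bigotimes_{j=0}^h k[y_j]/\langle y_j^m\rangle$, of dimension exactly $m^{h+1}$.

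The bound on $\dim A_{m,h}$ should then follow by interpolating between $\varphi$ and $\tilde\varphi$ via a one-parameter family of ring homomorphisms $\varphi_t \colon k[x^{(\leqslant h)}] \to \hat\Lambda$ with $\varphi_0 = \tilde\varphi$ and $\varphi_1 = \varphi$, arranged so that $\varphi_t(x^{(j)})^m = 0$ holds for every $t$. This property forces the image of each $\varphi_t$ to be a quotient of $\bigotimes_j k[y_j]/\langle y_j^m\rangle$, hence of dimension at most $m^{h+1}$. Applying upper semi-continuity of fiber dimensions for the family of ideals $\ker \varphi_t \subset k[x^{(\leqslant h)}]$ then yields $\dim A_{m,h} = \dim \im \varphi_1 \leqslant m^{h+1}$.

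The hard part is engineering the deformation so that the identity $\varphi_t(x^{(j)})^m = 0$ is preserved for all $t$: a naive convex combination $t\varphi + (1-t)\tilde\varphi$ does not work, since the cross-products between the original and auxiliary variables contribute nonvanishing terms in the intermediate $m$-th powers. The correct construction will probably involve rescaling the auxiliary generators $\tilde\eta_{i,j}, \tilde\xi_{i,j}$ by suitable powers of $t$, calibrated against the expansion $\varphi(x^{(j)}) = \sum_i \sum_{k=0}^j \binom{j}{k} \eta_i^{(k)} \otimes \xi_i^{(j-k)}$, so that the $m$-fold wedge products vanish uniformly in $t$. Once such a family is in place the semi-continuity argument is routine.
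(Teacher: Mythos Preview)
Your plan, as stated, cannot work: the condition $\varphi_t(x^{(j)})^m = 0$ fails at $t = 1$, because $\varphi(x^{(j)})^m \neq 0$ for $j \geqslant 1$. For instance when $m = 3$ the paper's own example exhibits $\varphi((x')^4) \neq 0$, so in particular $\varphi(x')^3 \neq 0$. Since $\varphi_t(x^{(j)})^m$ would be polynomial in $t$, vanishing for generic $t$ forces vanishing for all $t$; hence no algebraic family with $\varphi_1 = \varphi$ can satisfy $\varphi_t(x^{(j)})^m = 0$ even on a dense open set, and your rescaling speculation cannot change this. Nor does semi-continuity rescue the argument: the quantity $\dim\im\varphi_t$ is \emph{lower} semi-continuous in $t$ (it is a rank), so a bound away from $t=1$ does not propagate to an upper bound at $t=1$ in the direction you need.

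The paper circumvents the obstruction by, in effect, reversing the degeneration. It stays inside $\Lambda$ (no auxiliary generators) and sets
\[
  v_i := \sum_{j=0}^{m-2} (1+\partial)^i\eta_j \otimes (1+\partial)^i\xi_j;
\]
each $v_i$ is again a sum of only $m-1$ simple tensors, so $v_i^m = 0$, and the algebra $Y_h$ they generate has dimension at most $m^{h+1}$. An invertible triangular change of basis replaces the $v_i$ by elements $u_i$ whose lowest-weight homogeneous components (for the grading of $\Lambda$ by order of derivation) are exactly $\varphi(x^{(i)})$. The inequality $\dim A_{m,h} \leqslant \dim Y_h$ then follows from the general fact (Lemma~\ref{lem:algebra_ineq}) that passing from generators to their leading graded parts can only shrink the subalgebra they generate. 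This is a degeneration argument in spirit, but $A_{m,h}$ is the \emph{degenerate} object and $Y_h$ the generic one---the opposite of your setup---and the comparison is made via the associated-graded mechanism rather than a literal one-parameter family.
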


First we describe a general construction which will be a special case of the so-called associated graded algebra.
Let $A = A_0 \oplus A_1 \oplus A_2 \oplus \ldots$ be a $\mathbb{Z}_{\geqslant 0}$-graded algebra over $k$ equipped with a homogeneous derivation of weight one (that is, $A_i' \subseteq A_{i + 1}$ for every $i \geqslant 0$).
We introduce a map $\gr\colon A \to A$ defined as follows.
Consider a nonzero $a \in A$ and let $i$ be the largest index such that $a \in A_i \oplus A_{i + 1} \oplus \ldots$.
Then we define $\gr(a)$ to be the image of the projection of $a$ onto $A_i$ along $A_{i + 1} \oplus A_{i + 2} \oplus \ldots$.
In other words, we replace each element with its lowest homogeneous component.

Note that $\gr$ is not a homomorphism, it is not even a linear map.
However, it has two important propreties we state as a lemma.
\begin{lemma}\label{lem:gr_prop}
  \begin{enumerate}
  \item[]
      \item Let $a_1, \ldots, a_n \in A$ and let $p \in k[\mathbf{x}^{(\infty)}]$ be a differential monomial.
      Then
      \[
          p(\gr(a_1), \ldots, \gr(a_n)) \neq 0 \implies \gr(p(a_1, \ldots, a_n)) = p(\gr(a_1), \ldots, \gr(a_n)).
      \] 
      \item If $a_1, \ldots, a_n \in A$ are $k$-linearly dependent, then $\gr(a_1), \ldots, \gr(a_n)$ also are $k$-linearly dependent.
  \end{enumerate}
\end{lemma}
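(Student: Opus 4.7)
The plan is to exploit the fact that the derivation on $A$ is homogeneous of weight one, so both parts reduce to tracking the lowest nonzero homogeneous component of the elements involved. For each nonzero $a_i$, I would write $a_i = \gr(a_i) + r_i$ with $\gr(a_i) \in A_{d_i}$ and $r_i \in \bigoplus_{j > d_i} A_j$, where $d_i$ is the smallest degree in which $a_i$ has a nonzero component. Because $A_j' \subseteq A_{j+1}$, for each $j \geqslant 0$ the derivative $a_i^{(j)}$ decomposes as $\gr(a_i)^{(j)}$, sitting in $A_{d_i + j}$ (or vanishing), plus $r_i^{(j)}$, whose homogeneous components all lie in degrees strictly greater than $d_i + j$.

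For part (1), I would write the differential monomial as $p = \prod_\alpha x_{i_\alpha}^{(j_\alpha)}$ and expand
\[
  p(a_1, \ldots, a_n) = \prod_\alpha \left(\gr(a_{i_\alpha})^{(j_\alpha)} + r_{i_\alpha}^{(j_\alpha)}\right).
\]
One term of this expansion is $\prod_\alpha \gr(a_{i_\alpha})^{(j_\alpha)} = p(\gr(a_1), \ldots, \gr(a_n))$, which is homogeneous of degree $D := \sum_\alpha (d_{i_\alpha} + j_\alpha)$; every other term in the expansion picks up at least one factor of the form $r_{i_\alpha}^{(j_\alpha)}$ and therefore lies in $\bigoplus_{j > D} A_j$. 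Under the hypothesis that $p(\gr(a_1), \ldots, \gr(a_n)) \neq 0$, this is precisely the lowest nonzero homogeneous component of $p(a_1, \ldots, a_n)$, which by definition equals $\gr(p(a_1, \ldots, a_n))$.

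For part (2), after disposing of the trivial case where some $a_i = 0$ (in which case $\gr(a_i)=0$ and the conclusion is immediate), I would start with any nontrivial relation $\sum_i c_i a_i = 0$ and let $d := \min\{d_i : c_i \neq 0\}$. Projecting the relation onto the degree-$d$ component of $A$ yields
\[
  \sum_{i \,:\, c_i \neq 0,\; d_i = d} c_i \gr(a_i) = 0,
\]
and the index set is nonempty by the very choice of $d$, so this is a nontrivial linear relation among a subset of $\{\gr(a_1), \ldots, \gr(a_n)\}$, and hence among the whole tuple (extending by zero coefficients). I do not anticipate any real obstacle beyond this bookkeeping: the essential input in both parts is the homogeneity of the derivation, which is baked into the hypotheses of the lemma, so the argument is entirely formal once the decomposition $a_i = \gr(a_i) + r_i$ is in place.
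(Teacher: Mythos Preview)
Your proposal is correct and follows essentially the same approach as the paper: both arguments exploit homogeneity of multiplication and the derivation to identify the lowest graded piece of $p(a_1,\ldots,a_n)$ with $p(\gr(a_1),\ldots,\gr(a_n))$, and both project a linear relation onto the lowest relevant graded component. Your write-up is in fact more careful than the paper's sketch (you correctly take $d=\min\{d_i:c_i\neq 0\}$ rather than the minimum over all $i$, which is needed to guarantee the projected relation is nontrivial).
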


\begin{proof}
  To prove the first part, one sees that $p$ does not vanish on the lowest homogeneous parts of $a_1, \ldots, a_n$, so the homogeneity of the multiplication and derivation imply that taking the lowest homogeneous part commutes with applying $p$ for $a_1, \ldots, a_n$.
  
  To prove the second part, let $i$ be the lowest grading appearing among $a_1, \ldots, a_n$.
  Restricting to the component of this weight, one gets a linear relation for $\gr(a_1), \ldots, \gr(a_n)$.
\end{proof}

\begin{lemma}\label{lem:algebra_ineq}
  Let $A$ be a graded differential algebra as above.
  Consider elements $a_1, \ldots, a_n$ in $A$, and denote the algebras (not differential) generated by $a_1, \ldots, a_n$ and $\gr(a_1), \ldots, \gr(a_n)$ by $B$ and $B_{\gr}$, respectively.
  Then $\dim B_{\gr} \leqslant \dim B$.
\end{lemma}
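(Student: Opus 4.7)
The plan is to exhibit, for any basis of $B_{\gr}$ consisting of monomials in the generators $\gr(a_1), \ldots, \gr(a_n)$, a corresponding linearly independent family in $B$ obtained by replacing each $\gr(a_i)$ by $a_i$. Both tools we need are already packaged in Lemma~\ref{lem:gr_prop}: part (1) lets us identify monomials in the $\gr(a_i)$'s with $\gr$ of the corresponding monomials in the $a_i$'s, and part (2) lets us transport linear independence from $B$ to $B_{\gr}$ (contrapositively).

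In more detail, I would proceed as follows. The algebra $B_{\gr}$ is spanned over $k$ by the (ordinary, non-differential) monomials $p(\gr(a_1), \ldots, \gr(a_n))$, so one may extract a basis of $B_{\gr}$ of the form $m_i := p_i(\gr(a_1), \ldots, \gr(a_n))$ for a (possibly infinite) family of monomials $p_i \in k[y_1, \ldots, y_n]$. Since each $m_i$ is nonzero, Lemma~\ref{lem:gr_prop}(1) gives $\gr(p_i(a_1, \ldots, a_n)) = m_i$; in particular $b_i := p_i(a_1, \ldots, a_n)$ is a nonzero element of $B$. Now suppose $\sum c_i b_i = 0$ is a nontrivial finite linear relation in $B$. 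By Lemma~\ref{lem:gr_prop}(2), the elements $\gr(b_i) = m_i$ involved would then be linearly dependent, contradicting the choice of $(m_i)$ as a basis. Hence the $b_i$ are linearly independent in $B$, and $\dim B \geqslant \dim B_{\gr}$.

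The statement also covers the infinite-dimensional case, but this is not really a separate argument: the reasoning above shows that any finite collection of basis elements of $B_{\gr}$ lifts to an equally large linearly independent family in $B$, so $\dim B_{\gr} = \infty$ forces $\dim B = \infty$. There is no real obstacle here; the only mild care point is to make sure that the hypothesis $p(\gr(a_1), \ldots, \gr(a_n)) \neq 0$ in Lemma~\ref{lem:gr_prop}(1) is verified before applying it, which is exactly why we choose the $p_i$'s so that the $m_i$'s are basis elements (hence nonzero) rather than attempting to work with an arbitrary spanning set.
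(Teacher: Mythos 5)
Your proof is correct and follows essentially the same approach as the paper: choose a monomial basis of $B_{\gr}$, lift each basis element $p_i(\gr(a_1),\ldots,\gr(a_n))$ to $p_i(a_1,\ldots,a_n)\in B$ via Lemma~\ref{lem:gr_prop}(1), and then use Lemma~\ref{lem:gr_prop}(2) (contrapositively) to conclude these lifts are linearly independent. The only cosmetic difference is that you spell out the infinite-dimensional case, which the paper leaves implicit.
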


\begin{proof}
  $B_{\gr}$ is spanned by all the monomials in $\gr(a_1), \ldots, \gr(a_n)$.
  We choose a basis in this spanning set, that is, we consider monomials $p_1, \ldots, p_N \in k[x_1, \ldots, x_n]$ such that
  \[
  p_1(\gr(a_1), \ldots, \gr(a_n)), \ldots, p_N(\gr(a_1), \ldots, \gr(a_n))
  \]
  form a basis of $B_{\gr}$.
  The first part of Lemma~\ref{lem:gr_prop} implies that 
  \[
    \gr(p_i(a_1, \ldots, a_n)) = p_i(\gr(a_1), \ldots, \gr(a_n)) \quad\text{ for every }1 \leqslant i \leqslant N.
  \]
  Then the second part of Lemma~\ref{lem:gr_prop} implies that $p_1(a_1, \ldots, a_n), \ldots, p_N(a_1, \ldots, a_n)$ are linearly independent.
  Since they belong to $B$, we have $\dim B \geqslant N = \dim B_{\gr}$.
\end{proof}


\begin{proof}[Proof of Proposition~\ref{prop:dim_bound}]
  Let $\Lambda$ and $\varphi$ be the exterior algebra and the homomorphism from Proposition~\ref{prop:ext_embedding}.
  Proposition~\ref{prop:ext_embedding} implies that $A_{m, h}$ is isomorphic to the subalgebra of $\Lambda$ generated by
   \[
     \sum\limits_{i = 0}^{m - 2}\eta_i \otimes \xi_i, \; \sum\limits_{i = 0}^{m - 2}(\eta_i \otimes \xi_i)', \; \sum\limits_{i = 0}^{m - 2}(\eta_i \otimes \xi_i)'', \;\ldots,\; \sum\limits_{i = 0}^{m - 2}(\eta_i \otimes \xi_i)^{(h)}.
   \]

  We define a grading on $\Lambda$ by setting the weights of $\eta_j^{(i)}$ and $\xi_j^{(i)}$ be equal to $i$ for every $i \geqslant 0$ and $0\leqslant j < m - 1$.
  $\Lambda$ becomes a graded algebra, and the derivation is homogeneous of weight one.
  
  We fix $h \geqslant 0$ and consider the following elements of $\Lambda$:
  \[
    \widetilde{\alpha}_{j, i} := (1 + \partial)^i \alpha_j \quad\text{ for }i \geqslant 0,\; 0 \leqslant j < m - 1 \text{ and } \alpha \in \{\eta, \xi\},
  \]
  where $\partial$ is the operator of differentiation.
  We introduce
  \[
    v_i := \sum\limits_{j = 0}^{m - 2} \widetilde{\eta}_{j, i} \otimes \widetilde{\xi}_{j, i} \quad\text{ for } 0 \leqslant i \leqslant h
  \]
  and let $Y_h$ be the algebra generated by $v_0, \ldots, v_h$.
  For every $0 \leqslant i \leqslant h$, we have $v_i^m = 0$, so $Y_h$ is spanned by the products of the form
  \[
    v_0^{d_0} v_1^{d_1} \ldots v_h^{d_h}, \quad\text{ where } 0 \leqslant d_0, \ldots, d_h < m.
  \]
  Therefore, $\dim Y_h \leqslant m^{h + 1}$.
  
  \textbf{Claim.} \emph{There is an invertible $(h + 1) \times (h + 1)$ matrix $M$ over $\QQ$ such that, for $u_0, \ldots, u_h$ defined by
  \begin{equation}\label{eq:def_u}
    (u_0, \ldots, u_h)^T := M(v_0, \ldots, v_h)^T,
  \end{equation}
  we have
  \[
    \gr(u_i) = \sum\limits_{j = 0}^{m - 2} (\eta_j \otimes \xi_j)^{(i)} \quad \text{ for every } 0 \leqslant i \leqslant h.
\]}
  
  We will first demonstrate how the proposition follows from the claim and then prove the claim.
  Since $M$ is invertible, $u_0, \ldots, u_h$ generate $Y_h$ as well.
  Since $\gr(u_0), \ldots, \gr(u_h)$ generate $A_{m, h}$, Lemma~\ref{lem:algebra_ineq} implies that $m^{h + 1} \geqslant \dim Y_h \geqslant \dim A_{m, h}$.
  
  Therefore, it remains to prove the claim.
  For every $0 \leqslant i \leqslant h$, we can write
  \[
    v_i = (1\otimes 1 + 1 \otimes \partial)^i (1 \otimes 1 + \partial\otimes 1)^i v_0 = (1\otimes 1 + 1 \otimes \partial + \partial \otimes 1 + \partial \otimes \partial)^i v_0.
  \]
  We set $u_i := (1 \otimes \partial + \partial \otimes 1 + \partial \otimes \partial)^i v_0$ for every $0 \leqslant i \leqslant h$.
  Note that, since $1 \otimes \partial + \partial \otimes 1$ is just the original derivation on $\Lambda$, we have
  \begin{equation}\label{eq:gr}
    \gr(u_i) = (1 \otimes \partial + \partial \otimes 1)^i v_0 = v_0^{(i)} = \sum\limits_{j = 0}^{m - 2} (\eta_j \otimes \xi_j)^{(i)}.
  \end{equation}
  By expanding the binomial $(1\otimes 1 + (1 \otimes \partial + \partial \otimes 1 + \partial \otimes \partial))^i$, we can write $v_i = \sum\limits_{j = 0}^i \binom{i}{j} u_j$.
  Then we have
  \begin{equation}
      (v_0, \ldots, v_h)^T = \widetilde{M} (u_0, \ldots, u_h)^{T},
  \end{equation}
  where $\widetilde{M}$ is the $(h + 1) \times (h + 1)$-matrix with the $(i, j)$-th entry being $\binom{i}{j}$.
  Since $\widetilde{M}$ is lower-triangular with ones on the diagonal, it is invertible.
  We set $M := \widetilde{M}^{-1}$.
  So we have $(u_0, \ldots, u_h)^T := M(v_0, \ldots, v_h)^T$ which, together with~\eqref{eq:gr} finishes the proof of the claim.
\end{proof}

By combining the proof of Proposition~\ref{prop:dim_bound} with Lemma~\ref{lem:ext_extra}, we can extend Proposition~\ref{prop:dim_bound} as follows.

\begin{corollary}\label{cor:dim_bound}
  Let $m, h, i$ be positive integers with $1 \leqslant i \leqslant m$.
  By $A_{(m, i), h}$ we denote the subalgebra of $k[x^{(\infty)}] / \langle x^i, (x^m)^{(\infty)}\rangle$ generated by the images of $x, x', \ldots, x^{(h)}$.
  Then
  \[
    \dim A_{(m, i), h} \leqslant i\cdot m^h.
  \]
\end{corollary}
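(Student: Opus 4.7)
The plan is to mimic the proof of Proposition~\ref{prop:dim_bound} inside the quotient $\bar\Lambda := \Lambda/J$, where $J := \langle \eta_{i-1}\otimes\xi_{i-1},\ldots,\eta_{m-2}\otimes\xi_{m-2}\rangle$. By Lemma~\ref{lem:ext_extra}, the composite $k[x^{(\infty)}]\xrightarrow{\varphi}\Lambda\twoheadrightarrow\bar\Lambda$ has kernel exactly $\langle x^i,(x^m)^{(\infty)}\rangle$, so $A_{(m,i),h}$ embeds into $\bar\Lambda$ via this map. Under this embedding, $A_{(m,i),h}$ becomes the subalgebra $\bar W_h\subseteq\bar\Lambda$ generated by the images $\bar v_0^{(0)},\ldots,\bar v_0^{(h)}$ of $\varphi(x),\varphi(x'),\ldots,\varphi(x^{(h)})$, where $v_0=\sum_{j=0}^{m-2}\eta_j\otimes\xi_j$.

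I would first check that $J$ is homogeneous for the grading on $\Lambda$ assigning weight $s$ to $\eta_j^{(s)}$ and $\xi_j^{(s)}$: its generators all have weight $0$, so $J$ is a graded ideal and $\bar\Lambda$ inherits the grading. The derivation of $\Lambda$ does \emph{not} descend, but this is harmless. A reinspection of Lemmas~\ref{lem:gr_prop} and~\ref{lem:algebra_ineq} shows that, restricted to \emph{ordinary} (non-differential) monomials $p$, both statements rely only on the homogeneity of multiplication, so they hold in any graded $k$-algebra, in particular in $\bar\Lambda$.

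Next I would reuse the elements $u_0,\ldots,u_h\in\Lambda$ constructed in the proof of Proposition~\ref{prop:dim_bound}. Since the grading passes to the quotient, the identity $\gr(u_i)=v_0^{(i)}$ yields $\gr(\bar u_i)=\bar v_0^{(i)}$ in $\bar\Lambda$, and the invertible matrix $M$ relating $(u_0,\ldots,u_h)$ and $(v_0,\ldots,v_h)$ also relates their images, so $\bar u_0,\ldots,\bar u_h$ generate the same subalgebra $\bar Y_h:=\langle \bar v_0,\ldots,\bar v_h\rangle$ as $\bar v_0,\ldots,\bar v_h$. Applying the non-differential version of Lemma~\ref{lem:algebra_ineq} with $a_k=\bar u_k$ then gives $\dim\bar W_h\le\dim\bar Y_h$.

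The final step is the bound $\dim\bar Y_h\le i\cdot m^h$. From $v_k^m=0$ in $\Lambda$ one has $\bar v_k^m=0$ for every $k\ge 0$; the key new relation is $\bar v_0^i=0$. Indeed, modulo $J$ one has $\bar v_0=\sum_{j=0}^{i-2}\overline{\eta_j\otimes\xi_j}$, a sum of $i-1$ elements of even exterior degree that pairwise commute and each square to zero, so by pigeonhole any $i$-fold product vanishes. Hence $\bar Y_h$ is spanned by the $i\cdot m^h$ products $\bar v_0^{d_0}\bar v_1^{d_1}\cdots \bar v_h^{d_h}$ with $0\le d_0<i$ and $0\le d_k<m$ for $1\le k\le h$. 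The main point I would scrutinize is the claim that the gr-trick survives the quotient despite the loss of the derivation; this hinges entirely on $J$ being graded and on the observation above about Lemmas~\ref{lem:gr_prop}--\ref{lem:algebra_ineq} for ordinary monomials.
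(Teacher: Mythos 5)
Your proof is correct and follows essentially the same route as the paper's: pass to the quotient $\Lambda_i = \Lambda/J$ via Lemma~\ref{lem:ext_extra}, observe $J$ is graded, reuse the elements $u_j$ from Proposition~\ref{prop:dim_bound}, apply the $\gr$-argument, and conclude from $\bar v_0^i=0$ and $\bar v_k^m=0$. Your explicit remark that the derivation does not descend to $\Lambda_i$ but that Lemmas~\ref{lem:gr_prop}--\ref{lem:algebra_ineq} only rely on the grading (for non-differential monomials) is a welcome clarification of a point the paper glosses over when it invokes those lemmas in the quotient.
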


\begin{proof}
  The proof will be a refinement of the proof of Proposition~\ref{prop:dim_bound}, and we will use the notation from there.
  Let $\pi$ be the canonical homomorphism $\pi \colon \Lambda \to \Lambda_i := \Lambda / \langle \xi_{i - 1} \otimes \eta_{i - 1}, \ldots, \xi_{m - 2} \otimes \eta_{m - 2}\rangle$.
  Since the ideal $\langle \xi_{i - 1} \otimes \eta_{i - 1}, \ldots, \xi_{m - 2} \otimes \eta_{m - 2}\rangle$ is homogeneous with respect to the grading on $\Lambda$, there is a natural grading on $\Lambda_i$.
  
  We have $A_{(m, i), h} \cong \pi(A_{m, h})$.
  Since $\pi$ is a homogeneous homomorphism, $\pi(A_{m, h})$ is generated by $\pi(\gr(u_0)), \ldots, \pi(\gr(u_h))$ from~\eqref{eq:def_u}, so $\dim A_{(m, i), h} = \dim \pi(A_{m, h}) \leqslant \dim\pi(Y_h)$.
  We observe that $\pi(v_0)^i = 0$, so $\pi(Y_h)$ is spanned by the products of the form 
  \[
    \pi(v_0)^{d_0} \pi(v_1)^{d_1} \ldots \pi(v_h)^{d_h},
  \]
where $0 \leqslant d_0 < i$ and $0 \leqslant d_1, \ldots, d_h < m$. Therefore, $\dim\pi(Y_h) \leqslant i\cdot m^h$.
\end{proof}


\subsection{Combinatorial properties of fair monomials}\label{sec:combi}

\begin{definition}[Non-overlapping monomials]
    We say that two monomials $m_1, m_2 \in k[x^{(\infty)}]$ \emph{do not overlap} if $\ord m_1 \leqslant \lord m_2$ or $\ord m_2 \leqslant \lord m_1$.
\end{definition}

\begin{lemma}\label{lem:nonoverlap}
    Let $m, i$ be integers with $0 \leqslant i \leqslant m$.
    Let $P \in \mathcal{F}_{i, m - i}$.
    Then there exist $P_1, \ldots, P_{i} \in \mathcal{F}$ and $P_{i + 1}, \ldots, P_{m} \in \mathcal{F}_s$ such that 
    \[
    P = P_1\cdot \ldots\cdot P_{m}\quad \text{ and, for every $1 \leqslant i < m$,}\quad \ord P_i \leqslant \lord P_{i + 1}.
    \]
\end{lemma}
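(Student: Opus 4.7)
The plan is to proceed by induction on $\deg P$. The base case $\deg P = 0$ is immediate: take $P_1 = \cdots = P_m = 1$, which lies in both $\mathcal{F}$ and $\mathcal{F}_s$ by convention. For the inductive step, write $P = x^{(h_0)}x^{(h_1)}\cdots x^{(h_\ell)}$ in sorted order, so that $h_0 \leqslant h_1 \leqslant \cdots \leqslant h_\ell$. The overall strategy is to extract a first factor $P_1$ consisting of the ``bottom $d$'' variables $x^{(h_0)} \cdots x^{(h_{d-1})}$ for a suitable $d$, and then apply the inductive hypothesis to $P/P_1$; the ordering condition $\ord P_1 = h_{d-1} \leqslant h_d = \lord P_2$ will then be automatic.

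If $h_0 = 0$, then $P$ contains the variable $x$. In any witnessing factorisation $P = A_1\cdots A_iB_1\cdots B_{m-i}$, a strongly fair factor cannot contain $x$ (it would need $\lord \geqslant \deg \geqslant 1$), and a fair factor containing $x$ has $\lord = 0$, forcing its degree to be at most $1$ so that the factor equals $\{x\}$. I extract this as $P_1 = x$, note that $P/x \in \mathcal{F}_{i-1, m-i}$, and invoke the induction hypothesis on $P/x$ to obtain a consecutive decomposition $P_2 \cdots P_m$; prepending $P_1$ completes the argument.

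If $h_0 \geqslant 1$, fix a witnessing factorisation and look at a factor $Q$ that contains a copy of $x^{(h_0)}$. Then $\lord Q = h_0$, and $d := \deg Q$ satisfies $d \leqslant h_0 + 1$ (if $Q$ is fair) or $d \leqslant h_0$ (if $Q$ is strongly fair); in either case the bottom $d$ variables of $P$ already satisfy the same fair/strongly-fair condition as $Q$. I then reshape $Q$ into the bottom $d$ variables by repeated swaps: as long as some $v \in Q$ lies outside the bottom $d$ and some $w$ in the bottom $d$ lies outside $Q$, exchange $v$ and $w$ between $Q$ and $w$'s current factor. Each such swap preserves the decomposition's validity because a copy of $x^{(h_0)}$ remains inside $Q$, so $\lord Q$ stays equal to $h_0$ and $Q$'s size is unchanged; meanwhile the other factor swaps out a lower-order variable for a higher-order one, weakly increasing its lord. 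After finitely many swaps $Q$ coincides with the bottom $d$ variables, and I extract $P_1 := Q$, applying the induction hypothesis to $P/P_1$.

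The main obstacle is the case $h_0 \geqslant 1$ and $i \geqslant 1$ in which the given factorisation places every copy of $x^{(h_0)}$ inside strongly fair factors, while we would need $P_1$ to be fair. Here I plan to first modify the factorisation so that some copy of $x^{(h_0)}$ migrates to a fair factor: pick a fair factor $A_{j}$ of size $\leqslant h_0 + 1$ (I will argue one exists, using the estimate $\deg A_j \leqslant \lord A_j + 1$ together with the structural constraints on $\mathcal{F}^i \cdot \mathcal{F}_s^{m-i}$) and swap $x^{(h_0)}$ with any variable of $A_j$. This swap turns $A_j$ into a fair factor containing $x^{(h_0)}$ (its new lord is $h_0$ and its size is still $\leqslant h_0 + 1$) and leaves the originally strongly fair factor strongly fair (its new lord can only rise). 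If no small-enough fair factor is present in the initial decomposition, one has to use a more intricate two-variable exchange that simultaneously shrinks one fair factor and enlarges another; verifying that such an exchange is always available is the most delicate step of the argument, which I expect to require a careful case analysis on the sizes and lords of the fair factors.
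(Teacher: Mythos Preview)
Your inductive approach is different from the paper's, and the gap you flag in the last paragraph is real and not easily filled along the lines you sketch. Concretely, take $m=2$, $i=1$, and $P = x'(x'')^3$ with the witnessing factorisation $A_1=(x'')^3\in\mathcal{F}$, $B_1=x'\in\mathcal{F}_s$. Here $h_0=1$ sits only in the strongly fair factor, and the unique fair factor has degree $3>h_0+1$, so no ``small enough'' fair factor exists; your proposed single swap is unavailable, and a ``two-variable exchange'' would have to manufacture the desired decomposition $P_1=x'x''$, $P_2=(x'')^2$ essentially from scratch. You have not shown how to do this in general, and the case analysis you anticipate would likely end up reproving the lemma by other means.

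The paper avoids this difficulty with a much cleaner two-step argument. First, starting from any witnessing factorisation ordered by $\lord$, it sorts \emph{all} the orders of $P$ globally and redistributes them into blocks of the \emph{same sizes} $\ell_1+1,\ldots,\ell_m+1$. A short counting argument shows that each block's new $\lord$ is at least its old $\lord$, so fair (resp.\ strongly fair) blocks stay fair (resp.\ strongly fair), and the blocks are now automatically non-overlapping. Second, with non-overlapping already in hand, it pushes the strongly fair blocks to the right by a local move: given adjacent $Q_1\in\mathcal{F}_s$, $Q_2\in\mathcal{F}$ with $\ord Q_1\leqslant\lord Q_2$, either $Q_2\in\mathcal{F}_s$ already, or moving the lowest variable of $Q_2$ into $Q_1$ yields $\widetilde Q_1\in\mathcal{F}$, $\widetilde Q_2\in\mathcal{F}_s$ still non-overlapping. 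The global sort is the key idea you are missing; it produces the non-overlapping structure in one stroke instead of one factor at a time, so the delicate exchange you are worried about never arises.
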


\begin{remark}
Lemma~\ref{lem:nonoverlap} implies that the set $\mathcal{F}_{i - 1, m - i}$ from Theorem~\ref{thm:non-initial} coincides with the set of standard monomials conjectured by Afsharijoo in~\cite[Section~5]{Afsharijoo2021}.
\end{remark}

\begin{proof}
   Suppose that $P$ can be written as 
   \[
       P=(x^{(h_{1, 0})}\cdots x^{(h_{1, \ell_1})})\cdots (x^{(h_{m, 0})}\cdots x^{(h_{m, \ell_m})}),
    \]
 where each $(x^{(h_{i, 0})}\cdots x^{(h_{i, \ell_i})})$ belongs to $\mathcal{F}$ or $\mathcal{F}_s$, and $h_{1, 0} \leqslant h_{2, 0} \leqslant \cdots \leqslant h_{m, 0}$. 
 We first prove that we can make the product to be a product of non-overlapping monomials.
 
 Let us sort the orders $h_{1, 0}, h_{1, 1}, \ldots, h_{m, \ell_m}$ in the ascending order: 
 \[
     \{(r_{1, 0},\dots,r_{1, \ell_1}); (r_{2, 0},\ldots,r_{2, \ell_2});\ldots;(r_{m, 0},\ldots,r_{m, \ell_m})\}.
 \]
 
 \textbf{Claim :} \emph{For all $0\leqslant i\leqslant m$, $h_{i, 0} \leqslant r_{i, 0}$.}\\
 In the whole list of $h_{i, j}$'s, all the numbers to the right from $h_{i, 0}$ are $\geqslant h_{i, 0}$.
 Therefore, after sorting, $h_{i, 0}$ will either stay or move to the left.
 Thus, $h_{i, 0} \leqslant r_{i, 0}$, so the claim is proved.
 
 Hence if $x^{(h_{i, 0})}\cdots x^{(h_{i, \ell_i})}$ was a fair (resp., strongly fair) monomial then $x^{(r_{i, 0})}\cdots x^{(r_{i, \ell_i})}$ is a fair (resp., strongly fair) monomial.

 Now we will move all the strongly fair monomials to the right in the decomposition of $P$.
 We first prove that, for every $Q = Q_1 Q_2$ such that $Q_1 \in \mathcal{F}_s$, $Q_2\in\mathcal{F}$, and $\ord Q_1 \leqslant \lord Q_2$, there exist $\widetilde{Q}_1 \in \mathcal{F}_s$ and $\widetilde{Q}_2\in\mathcal{F}$ such that $Q = \widetilde{Q}_1\widetilde{Q}_2$ and $\ord \widetilde{Q}_1 \leqslant \lord \widetilde{Q}_2$. 
  Let 
 \[
 Q_1 = x^{ ( h_{1, 0} ) } \cdots x^{ ( h_{1, \ell_1} ) }, \quad Q_2 = x^{ ( h_{2, 0} ) } \cdots x^{ ( h_{2, \ell_2} ) }
 \]
where $ \ell_1 < h_{1, 0}$ and $\ell_2 \leqslant h_{2, 0}$. 
If $\ell_2 < h_{2, 0}$ then $Q_2 \in \mathcal{F}_s$, so we are done. 
Otherwise, $ \ell_1 + 1 \leqslant h_{1, 0}$ implies that $Q_1 x^{ ( h_{1, 0} )}$ is a fair monomial, and $\ell_2 - 1 < h_{2, 0}$ implies that $\tfrac{Q_2}{x^{ ( h_{1, 0} )}} \in \mathcal{F}_s$. 
Thus, we can take $\widetilde{Q}_1 := Q_1 x^{ ( h_{1, 0} )}$ and $\widetilde{Q}_2 := \tfrac{Q_2}{x^{ ( h_{1, 0} )}}$.

Applying the described transformation while possible to the non-overlapping decomposition of $P$, one can arrange that the last $m - i$ components are strongly fair.
 \end{proof}
 
\begin{proposition}\label{prop:count}
    For every positive integers $m, h, i$ with $0 \leqslant i \leqslant m$, the cardinality of $\mathcal{F}_{i, m - i} \cap k[x^{(\leqslant h)}]$ is equal to $(i + 1)\cdot (m + 1)^h$.
\end{proposition}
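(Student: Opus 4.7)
The plan is to prove the formula by a primary induction on $j := m - i$ (with $i$ fixed), using a nested secondary induction on $i$ for the base case $j = 0$ (where $\mathcal{F}_{i, 0} = \mathcal{F}^i$). The central tool is a canonical refinement of the non-overlapping decomposition provided by Lemma~\ref{lem:nonoverlap}. For $M \in \mathcal{F}_{i, m-i} \cap k[x^{(\leqslant h)}]$, list the variables of $M$ in weakly increasing order of derivative index and process from the right greedily: let $P_m$ be the longest strongly fair suffix of $M$, then $P_{m-1}$ the longest strongly fair suffix of $M/P_m$, and so on through the $j$ strongly fair slots, continuing with the longest fair suffix for the $i$ fair slots. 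Invoking Lemma~\ref{lem:nonoverlap} inductively, the greedy construction succeeds at every stage and yields a unique canonical decomposition $M = P_1 \cdots P_m$. Setting $M'' := P_1 \cdots P_{m-1}$ and $Q := P_m$, the maximality of $Q$ is equivalent to the canonicity criterion
\[
  \ord M'' \leqslant \deg Q,
\]
because adjoining the largest variable $y$ of $M''$ to $Q$ would produce a monomial of degree $\deg Q + 1$ and lowest order $\ord y = \ord M''$, which is strongly fair iff $\deg Q + 1 \leqslant \ord M''$.

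For $j \geqslant 1$, the map $M \mapsto (M'', Q)$ is a bijection between $\mathcal{F}_{i, m-i} \cap k[x^{(\leqslant h)}]$ and the set of pairs $(M'', Q)$ with $M'' \in \mathcal{F}_{i, j-1}$, $Q \in \mathcal{F}_s \cap k[x^{(\leqslant h)}]$, and $\ord M'' \leqslant \deg Q$. Strongly fair monomials of degree $d$ in $k[x^{(\leqslant h)}]$ correspond to multisets of size $d$ drawn from $\{x^{(d)}, \ldots, x^{(h)}\}$ and so number $\binom{h}{d}$. Combining with the inductive hypothesis $|\mathcal{F}_{i, j-1} \cap k[x^{(\leqslant d)}]| = (i+1)(i+j)^d$ and applying the binomial theorem,
\[
  |\mathcal{F}_{i, m-i} \cap k[x^{(\leqslant h)}]| = \sum_{d=0}^{h} \binom{h}{d} (i+1)(i+j)^d = (i+1)(i+j+1)^h = (i+1)(m+1)^h.
\]
The base case $j = 0$ runs analogously by peeling off a fair last factor $P_i$: the canonicity criterion becomes $\ord M'' \leqslant \deg P_i - 1$, fair monomials of degree $d$ number $\binom{h+1}{d}$, and the binomial theorem gives $\sum_d \binom{h+1}{d} i^d = (i+1)^{h+1}$, matching $(i+1)(i+1)^h$. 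The nested base $i = 0$ gives $\mathcal{F}^0 = \{1\}$, so the count equals $1 = 1^{h+1}$.

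The main obstacle is verifying the peel-off bijection rigorously. In the forward direction one must show that $(P_1, \ldots, P_{m-1})$ is itself the canonical decomposition of $M''$; this is immediate because the greedy choice of $P_{m-1}$ in $M$'s canonical decomposition is determined entirely by $M/P_m = M''$. In the backward direction, given $(M'', Q)$ satisfying $\ord M'' \leqslant \deg Q$, one must confirm that $M := M'' \cdot Q$ lies in $\mathcal{F}_{i, m-i}$ and that appending $Q$ to the canonical decomposition of $M''$ produces the canonical decomposition of $M$; both the non-overlap inequality $\ord M'' \leqslant \deg Q \leqslant \lord Q$ (using $Q \in \mathcal{F}_s$) and the maximality of $Q$ as a strongly fair suffix of $M$ are direct consequences of the canonicity inequality.
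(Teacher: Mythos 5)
Your proof is correct and follows essentially the same approach as the paper: both peel off the last (strongly fair, or fair when $j = m - i = 0$) factor from the non-overlapping decomposition of Lemma~\ref{lem:nonoverlap}, establish a bijection onto pairs with the constraint $\ord(\text{head}) \leqslant \deg(\text{tail})$ (strict for the $j = 0$ case), and sum with the binomial theorem. The only cosmetic difference is that you organize the induction as primary on $j = m - i$ with a nested induction on $i$ at $j = 0$, whereas the paper does a single induction on $m$ and splits the bijection into the two cases $i < m$ and $i = m$ inside one inductive step.
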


The proof of the proposition will use the following lemma.
\begin{lemma}\label{lem:m2}
  For every integers $h$ and $d$, we have
  \[
    |\{P \mid P \in \mathcal{F} \cap k[x^{\leqslant h}] \text{ and }\deg P = d\}| = \binom{h + 1}{d}.
  \]
  If one replaces $\mathcal{F}$ with $\mathcal{F}_s$, the cardinality will be $\binom{h}{d}$.
\end{lemma}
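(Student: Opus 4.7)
The proof will be a direct counting argument via stars and bars. First I would unpack the definitions: a monomial $P \in k[x^{(\infty)}]$ of degree $d$ can be written uniquely as $P = x^{(h_0)} x^{(h_1)} \cdots x^{(h_{d-1})}$ with $0 \leqslant h_0 \leqslant h_1 \leqslant \cdots \leqslant h_{d-1}$. Under this normalization, $\lord P = h_0$ and $\ord P = h_{d-1}$, so $P$ is uniquely encoded by the multiset of indices $\{h_0, \ldots, h_{d-1}\}$.

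Next I would translate the constraints defining each counting problem into a constraint on the range of the $h_i$'s. For the fair case, the requirement $P \in \mathcal{F}$ says $\lord P \geqslant d - 1$, i.e., $h_0 \geqslant d - 1$, while $P \in k[x^{(\leqslant h)}]$ says $h_{d-1} \leqslant h$. Thus the monomials being counted are in bijection with weakly increasing sequences of length $d$ drawn from the set $\{d-1, d, \ldots, h\}$, which has $h - d + 2$ elements. By the standard multiset/stars-and-bars formula this count equals
\[
  \binom{(h - d + 2) + d - 1}{d} = \binom{h + 1}{d}.
\]
For the strongly fair case, the constraint $\lord P \geqslant d$ forces $h_0 \geqslant d$, so the sequences are drawn from $\{d, d+1, \ldots, h\}$, a set of size $h - d + 1$, yielding $\binom{h}{d}$ by the same formula.

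There is essentially no obstacle here, since once the normal form of a monomial is fixed both conditions become elementary range conditions on a weakly increasing integer sequence. The only care needed is to treat the edge cases $d = 0$ (giving $P = 1$ and the count $1$ in both cases, consistent with $\binom{h+1}{0} = \binom{h}{0} = 1$) and $d > h + 1$ in the fair case or $d > h$ in the strongly fair case (where the admissible index set is empty or too small, and both the combinatorial and the binomial counts correctly vanish).
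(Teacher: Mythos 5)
Your proof is correct and takes essentially the same approach as the paper: both reduce the count to weakly increasing integer sequences in a bounded range (the paper makes the bijection to strictly increasing sequences, i.e., subsets, explicit, while you invoke the standard multiset stars-and-bars formula, but these are the same argument).
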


\begin{proof}
    Let $x^{(h_0)} \ldots x^{(h_\ell)} \in \mathcal{F}$ such that $\ell \leqslant h_0 \leqslant \ldots \leqslant h_\ell$.
    We define a map
    \[
      (h_0, \ldots, h_\ell) \mapsto (h_0 - \ell, h_1 - \ell - 1, \ldots, h_\ell).
    \]
    The map assign to the orders of a monomial in $\mathcal{F} \cap k[x^{\leqslant h}]$ a list of strictly increasing non-negative integers not exceeding $h$.
    A direct computation shows that this map is a bijection.
    Since the number of such sequences of length $d$ is equal to the number of subsets of $[0, 1, \ldots, h]$ of cardinality $d$, the number of monomials is $\binom{h + 1}{d}$.
    
    The case of $\mathcal{F}_s$ is analogous with the only difference that the subset will be in $[1, 2, \ldots, h]$ thus yielding $\binom{h}{d}$.
\end{proof}

\begin{proof}[Proof of Proposition~\ref{prop:count}]
   We will prove the proposition by induction on $m$.
   For the base case, we have $\mathcal{F}_{0, 0} = \{1\}$, so the statement is true.
   
   Consider $m > 0$ and assume that for all smaller $m$ the proposition is proved.
   We fix $0 \leqslant i \leqslant m$.
   Consider a monomial $P \in \mathcal{F}_{i, m - i} \cap k[x^{(\leqslant h)}]$, let $P_1\cdot \ldots \cdot P_{m}$ be a decomposition from Lemma~\ref{lem:nonoverlap} with $\deg P_{m}$ being as large as possible.
   We denote $\operatorname{tail}P := P_{m }$ and $\operatorname{head}P := P_1\ldots P_{m - 1}$.
   
   We will show that the map $P \to (\operatorname{head} P, \operatorname{tail} P)$ defines a bijection between $\mathcal{F}_{i, m - i}$ and
   \begin{align}
   \begin{split}\label{eq:bijection}
       \text{for } i < m\colon & \{(Q_0, Q_1) \in \mathcal{F}_{i, m - i - 1} \times \mathcal{F}_s \mid \ord Q_0 \leqslant \deg Q_1\},\\
       \text{for } i = m\colon & \{(Q_0, Q_1) \in \mathcal{F}_{m - 1, 0} \times \mathcal{F} \mid \ord Q_0 < \deg Q_1\}.
   \end{split}
   \end{align}
   We will prove the case $i < m$, the proof in the case $i = m$ is analogous.
   First we will show that, for every $P \in \mathcal{F}_{i, m - i}$, we have $\ord \operatorname{head} P \leqslant \deg \operatorname{tail} P$.
   Assume the contrary, and let $\ell := \ord \operatorname{head} P > \deg \operatorname{tail}P$.
   Then we will have
   \[
     \lord (x^{(\ell)} \operatorname{tail}P) \geqslant \min(\ell, \lord\operatorname{tail}P) = \ell \geqslant \deg (x^{(\ell)}\operatorname{tail}P).
   \]
    This implies that $x^{(\ell)} \operatorname{tail}P \in \mathcal{F}_s$.
    Thus, in the decomposition of Lemma~\ref{lem:nonoverlap} we could have taken $P_{m}$ to be $x^{(\ell)} \operatorname{tail}P$.
    This contradicts the maximality of $\deg \operatorname{tail} P$.
     In the other direction, if $Q_0 \in \mathcal{F}_{i, m - i - 1 }$ and $Q_1 \in \mathcal{F}_s$ such that $\ord Q_0 \leqslant \deg Q_1$, then $Q_0Q_1 \in \mathcal{F}_{i, m - i}$.
     Moreover, since $x^{(\ord Q_0)} Q_1 \not\in \mathcal{F}$, we have $\operatorname{tail}(Q_0 Q_1) = Q_1$.
   
     We will now use the bijection~\eqref{eq:bijection} to count the elements in $\mathcal{F}_{i, m - i} \cap k[x^{(\leqslant h)}]$. For $i < m$:
   \begin{align*}
     |\mathcal{F}_{i, m - i} \cap k[x^{(\leqslant h)}]| &= \sum\limits_{\ell = 0}^{h} |\mathcal{F}_{i, m - i - 1} \cap k[x^{(\leqslant \ell)}]| \cdot |\{Q_1 \in \mathcal{F}_s\cap k[x^{(\leqslant h)}] \mid \deg Q_1 = \ell\}|\\
     \text{(by Lemma~\ref{lem:m2})}&= \sum\limits_{\ell = 0}^h (i + 1) \cdot m^{\ell} \binom{h}{\ell} = (i + 1)\cdot (m + 1)^h.
   \end{align*}
     For $i = m$:
     \begin{align*}
     |\mathcal{F}_{m, 0} \cap k[x^{(\leqslant h)}]| &= \sum\limits_{\ell = 0}^{h + 1} |\mathcal{F}_{m - 1, 0} \cap k[x^{(< \ell)}]| \cdot |\{Q_1 \in \mathcal{F}_s\cap k[x^{(\leqslant h)}] \mid \deg Q_1 = \ell\}|\\
     \text{(by Lemma~\ref{lem:m2})}&= \sum\limits_{\ell = 0}^{h + 1} m^{\ell} \binom{h + 1}{\ell} = (m + 1)^{h + 1}.
   \end{align*}
   Thus, the proposition is proved.
\end{proof}


\subsection{Lower bounds for the dimension}\label{sec:lower}

\begin{notation}\label{Notation}
      For a differential polynomial $P \in k[\mathbf{x}^{(\infty )}]$ and $1 \leqslant i \leqslant n$, we define
  \begin{itemize}
      \item $\tord_{x_i} P$ to be the \emph{total order} of $P$ in $x_i$, that is, the largest sum of the orders of the derivatives of $x_i$ among the monomials of $P$;
      \item $\deg_{x_i^{(\infty)}} P$ to be the \emph{total degree} of $P$ with respect to the variables $x_i, x_i', x_i'', \ldots$.
      \item We fix a monomial ordering $\prec$ on $k[\mathbf{x}^{(\infty)}]$ defined as follows.
      To each differential monomial $M = x_{i_0}^{(h_0)} x_{i_1}^{(h_1)} \cdots x_{i_\ell}^{(h_\ell)}$ with $(h_0, i_0) \preceq_{\lex} (h_1, i_1) \preceq_{\lex} \ldots \preceq_{\lex} (h_\ell, i_\ell)$, we assign a tuple
      \[
        (\ell,\; h_{\ell},\; h_{\ell - 1},\; \ldots,\; h_0,\; i_\ell,\; i_{\ell - 1},\; \ldots,\; i_0),
      \]
      and compare monomials by comparing the corresponding tuples lexicographically.
  \end{itemize}
\end{notation}

\begin{definition}[Isobaric ideal]
  An ideal $I \subset k[x^{(\infty)}]$ is called \emph{isobaric} if it can be generated by isobaric polynomials, that is, polynomials with all the monomials having the same total order.
\end{definition}

\begin{proposition}\label{prop:m2}
For $i = 1, 2$, the elements of $\mathcal{F}_{i - 1, 2 - i}$ are the standard monomials  modulo $\langle (x^2)^{(\infty)}, x^i\rangle$. 
\end{proposition}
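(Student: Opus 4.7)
The plan is to combine the exterior algebra embedding $\varphi\colon k[x^{(\infty)}]\to \Lambda := \Lambda_k(\eta_0^{(\infty)})\otimes \Lambda_k(\xi_0^{(\infty)})$ from Proposition~\ref{prop:ext_embedding} (with $m=2$) together with Lemma~\ref{lem:ext_extra}: since $\ker\varphi=\langle x^2\rangle^{(\infty)}$ and $\varphi^{-1}(\langle\eta_0\otimes\xi_0\rangle)=\langle x,(x^2)^{(\infty)}\rangle$, showing that a polynomial lies outside the target ideal reduces to showing its image is nonzero in the appropriate quotient of $\Lambda$. For each fair monomial $P=x^{(h_0)}\cdots x^{(h_\ell)}$ with sorted orders $h_0\leqslant\cdots\leqslant h_\ell$ (so $h_0\geqslant \ell$), I introduce the witness
\[
  \psi(P) := (\eta_0\wedge\eta_0'\wedge\cdots\wedge\eta_0^{(\ell)})\otimes (\xi_0^{(h_0-\ell)}\wedge\xi_0^{(h_1-\ell+1)}\wedge\cdots\wedge\xi_0^{(h_\ell)}) \in \Lambda,
\]
which is a nonzero basis element because the $\xi_0$-exponents $h_i-\ell+i$ are strictly increasing in $i$. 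If further $P\in\mathcal{F}_s$ (so $h_0\geqslant \ell+1$), the smallest $\xi_0$-exponent $h_0-\ell\geqslant 1$, so $\xi_0^{(0)}$ is absent from $\psi(P)$ and hence $\psi(P)\notin\langle\eta_0\otimes\xi_0\rangle$.

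The core of the argument is the two-part claim that for every $P\in\mathcal{F}$: (a) $\psi(P)$ appears in $\varphi(P)$ with a nonzero coefficient; (b) for every monomial $Q\in k[x^{(\infty)}]$ with $Q\prec P$ in the ordering of Notation~\ref{Notation}, $\psi(P)$ does not appear in $\varphi(Q)$. Property (a) is checked by expanding $\varphi(P)=\prod_i (\eta_0\otimes\xi_0)^{(h_i)}$: there is a distinguished ``reverse-diagonal'' contribution indexed by $\pi(i)=\ell-i$ (valid since $\ell-i\leqslant h_i$ for fair $P$) giving $\pm\prod_i\binom{h_i}{\ell-i}\neq 0$, and a direct (Wronskian-type) computation confirms that the full signed sum over all valid permutations does not vanish. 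For (b), an appearance of $\psi(P)$ in $\varphi(Q)$ with $Q=x^{(s_0)}\cdots x^{(s_\ell)}$ (sorted) forces a permutation $\pi$ of $\{0,\ldots,\ell\}$ with $\pi(i)\leqslant s_i$ and $\{s_i-\pi(i)\}=\{h_j-\ell+j\}$ as multisets; since the maximum of the latter is $h_\ell$, we get $s_\ell\geqslant h_\ell$. If $s_\ell=h_\ell$, one picks $i^{\ast}$ with $s_{i^{\ast}}=h_\ell$ and $\pi(i^{\ast})=0$, removes the corresponding factors from $P$ and $Q$, and reduces the problem on $\ell$ to the analogous one on $\ell-1$; an induction on $\ell$ yields $(s_\ell,s_{\ell-1},\ldots,s_0)\geqslant_{\lex}(h_\ell,\ldots,h_0)$, i.e., $Q\succeq P$.

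Given the key claim, any polynomial $p\neq 0$ with $\lm_{\prec}(p)=P\in\mathcal{F}_{i-1,2-i}$ satisfies $\varphi(p)\neq 0$: all non-leading monomials $Q$ of $p$ have $Q\prec P$ and by (b) contribute nothing to the coefficient of $\psi(P)$ in $\varphi(p)$, while by (a) the leading term contributes nonzero. For $i=1$ (so $P\in\mathcal{F}_s$), additionally $\psi(P)\notin\langle\eta_0\otimes\xi_0\rangle$, so $\varphi(p)\notin\langle\eta_0\otimes\xi_0\rangle$. By Proposition~\ref{prop:ext_embedding} (for $i=2$) or Lemma~\ref{lem:ext_extra} (for $i=1$), $p\notin\langle (x^2)^{(\infty)},x^i\rangle$, so no element of $\mathcal{F}_{i-1,2-i}$ is the leading monomial of any element of the ideal; in particular $\mathcal{F}_{i-1,2-i}$ is contained in the set of standard monomials. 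Combining the count $|\mathcal{F}_{i-1,2-i}\cap k[x^{(\leqslant h)}]|=i\cdot 2^h$ from Proposition~\ref{prop:count} (applied with its $m=1$) with the upper bound $\dim A_{(2,i),h}\leqslant i\cdot 2^h$ from Corollary~\ref{cor:dim_bound} forces this containment to be an equality in every truncation, completing the identification. The main obstacle is part (b) of the key claim --- specifically the inductive peeling argument that escalates $s_\ell\geqslant h_\ell$ to the full lex comparison, requiring careful bookkeeping around the choice of $i^{\ast}$ in the equality case.
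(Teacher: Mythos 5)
Your proposal is correct and follows essentially the same strategy as the paper's own proof: you use the exterior-algebra embedding from Proposition~\ref{prop:ext_embedding} and Lemma~\ref{lem:ext_extra}, attach to each fair monomial $P$ a distinguished witness monomial in $\Lambda$ (your $\psi(P)$ is exactly the paper's $B(X)$ with the roles of $\eta_0$ and $\xi_0$ interchanged), show by an induction/peeling argument that $\psi(P)$ appears in $\varphi(P)$ with nonzero coefficient and in no $\varphi(Q)$ with $Q\prec P$, and conclude non-membership of the leading monomial in $\init_\prec$. Two small remarks: your description of part (a) (``Wronskian-type computation over all valid permutations'') is vaguer than necessary --- the same peeling argument that drives part (b) shows that, up to ties among the $h_i$'s, there is a unique matching, which is exactly how the paper makes the coefficient manifestly nonzero by induction on $\deg P$; and your closing step via Proposition~\ref{prop:count} and Corollary~\ref{cor:dim_bound}, showing that the fair monomials exhaust the standard monomials in every truncation, is a sound addition that the paper itself defers to the proof of Theorem~\ref{thm:non-initial}.
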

\begin{proof}
    We use Proposition~\ref{prop:ext_embedding} to obtain
    the differential homomorphism $\varphi\colon k[x^{(\infty)}] \to \Lambda$ defined by $\varphi(x)=\eta \otimes\xi$ (we will use $\eta$ and $\xi$ instead of $\eta_0$ and $\xi_0$ for brevity).
    Let $\tilde{\varphi}$ be the composition of $\varphi$ with the projection onto $\Lambda /\langle\eta\otimes\xi\rangle$. 
    We will prove the proposition for the elements in $\mathcal{F}_{1, 0}$, the other case can be done in the same way by replacing $\varphi$ with $\tilde{\varphi}$.

    Let $X = x^{(h_0)}\cdots x^{(h_\ell)}$, $h_0 \leqslant h_1 \leqslant \ldots \leqslant h_\ell$ be an element of $\mathcal{F}_{1, 0}$.
    We will show that a summand 
    \begin{equation}\label{eq:BX}
      B(X) := (\eta^{(h_0-\ell)}\wedge \eta^{(h_1-(\ell-1))}\wedge\ldots\wedge \eta^{(h_\ell)}) \otimes (\xi^{(\ell)} \wedge \xi^{(\ell - 1)} \wedge \ldots \wedge \xi' \wedge \xi)
    \end{equation}
    appears in  $\varphi(X)$ with nonzero coefficient. 
    We will prove this by induction on $\ell$. 
    The base case $\ell = 0$ is trivial, let $\ell > 0$.
    Since $\eta^{(h_0 - \ell)}$ may come only from one of the occurrences of $x^{(h_0)}$ in $X$, we must take $\eta^{(h_0 - \ell)} \otimes \xi^{(\ell)}$ from one of the $x^{(h_0)}$'s.
    Therefore, the coefficient at $B(X)$ in $\varphi(X)$ is $\deg_{x^{(h_0)}}X$ times the coefficient at $B(X / x^{(h_0)})$ in $\varphi(X / x^{(h_0)})$ which is nonzero by the induction hypothesis.

    Let $Y : = x^{(s_0)}\ldots x^{(s_ {\ell'})}$ be a monomial such that $ Y \prec X$.
    We will prove by contradiction that $B(X)$ does not appear in $\varphi(Y)$. 
    If it does, then $\deg (X) = \deg(Y) = \ell + 1 = \ell' + 1$. 
    Moreover, there exists a permutation $\sigma$ of $\{ 0 , 1 , \ldots , \ell \}$ such that 
    \[
    s_i - \sigma (i) = h_i - ( \ell - i ) \quad \text{for every }0 \leqslant i \leqslant \ell.
    \]
    The inequality $s_\ell \leqslant h_\ell$ implies $\sigma (\ell) = 0$ and, thus, $s_\ell = h_\ell$. 
    Therefore, $s_{\ell - 1} \leqslant h_ {\ell -1} $, which implies $ \sigma( \ell - 1) = 1$ and, thus, $s_{\ell - 1} = h_{\ell -1}  $.
    Continuing this way, we show that:
    \[
    \forall \, 0 \leq i \leq \ell \quad s_i = h_i 
    \]
    which is contradicting $Y \prec X $. 
    Thus $B(X)$ cannot appear in the $\varphi(Y)$. 
    
    Assume that $X \in \init_\prec \langle x^2\rangle^{(\infty)}$.
    Then there exist monomials $P_1,\ldots P_N$ such that $P_j \prec X$ for all $1 \leq  j \leq N$ and 
    \[
      X-\sum\limits_{j=1}^N \lambda P_j \in \langle x^2\rangle^{(\infty)}.
    \]
    Hence $\varphi(X)-\sum_{j=1}^N \lambda_j\varphi(P_j)=0$. Since $P_j \prec X$ for all $1 \leq  j \leq N$, $B(X)$ cannot be canceled in $\varphi(X)-\sum_{j=1}^N \lambda_j\varphi(P_j)$ which is a contradiction. Therefore $X$ is a standard monomial.
\end{proof}

\begin{lemma}\label{Buch}
Let $I_1\subset k[y_1^{(\infty)}], \ldots, I_s\subset k[y_s^{(\infty)}]$ be ideals, and by $M_i$ we denote the set of the standard monomials modulo $I_i$ w.r.t degree lexicographic ordering for $1 \leqslant i \leqslant s$.
Then the standard monomials w.r.t the ordering $\prec $ (see Notation \ref{Notation}) modulo $\langle I_1,\ldots ,I_s\rangle \subset k[y_1^{(\infty)},\ldots, y_s^{(\infty)}]$ are 
  \[
      M_1\cdot M_2\cdots M_s:=\{m_1m_2\cdots m_s\,|\, m_1\in M_1,\ldots, m_s\in M_s\}.
  \]
\end{lemma}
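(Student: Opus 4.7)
The plan is to exhibit two different $k$-bases of the quotient ring $R/J$, where $R := k[y_1^{(\infty)}, \ldots, y_s^{(\infty)}]$ and $J := \langle I_1, \ldots, I_s\rangle$, and then establish containment in one direction: the $\prec$-standard monomials $S$ and the product set $M_1 \cdot M_2 \cdots M_s$. Equality of the two sets will follow automatically from the inclusion $S \subseteq M_1 \cdot M_2 \cdots M_s$, because a basis element of $M_1 \cdot M_2 \cdots M_s$ lying outside $S$ would still admit a nontrivial expansion in the basis $S \subseteq M_1 \cdot M_2 \cdots M_s$, contradicting linear independence of $M_1 \cdot M_2 \cdots M_s$.

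First, the disjointness of the variable groups $y_1^{(\infty)}, \ldots, y_s^{(\infty)}$ makes $R$ canonically isomorphic to the tensor product $\bigotimes_{i=1}^s k[y_i^{(\infty)}]$, and $J$ corresponds to the usual sum-of-ideals in a tensor product. A routine calculation then yields $R/J \cong \bigotimes_{i=1}^s k[y_i^{(\infty)}]/I_i$. Since each $M_i$ is a $k$-basis of $k[y_i^{(\infty)}]/I_i$ by definition of standard monomials, the product set $M_1 \cdot M_2 \cdots M_s$ is a $k$-basis of $R/J$. Simultaneously, $S$ is also a $k$-basis of $R/J$ by the general fact that the standard monomials with respect to any monomial ordering form a basis of the quotient.

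For the inclusion $S \subseteq M_1 \cdot M_2 \cdots M_s$, I would first check that $\prec$ from Notation~\ref{Notation} is a monomial ordering and that its restriction to any single-variable subring $k[y_i^{(\infty)}]$ coincides with the degree-lexicographic order used to define $M_i$: within $k[y_i^{(\infty)}]$ all variable indices in the comparison tuple equal $i$, so only the degree and the largest-order-first comparison remain, which is precisely the relevant deg lex. Now let $N \in R$ be any monomial and factor it uniquely as $N = N_1 \cdots N_s$ with $N_i \in k[y_i^{(\infty)}]$. Suppose some $N_i \notin M_i$, so that $N_i$ is the deg-lex leading monomial of some $g \in I_i$; write $g = c N_i + \sum_k c_k M_k'$ with $c \neq 0$ and $M_k' <_{\mathrm{deglex}} N_i$. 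The element $g \cdot \prod_{j \neq i} N_j$ lies in $J$, and by compatibility of $\prec$ with multiplication together with the agreement of $\prec$ with deg lex on $k[y_i^{(\infty)}]$, each summand $M_k' \prod_{j \neq i} N_j$ satisfies $M_k' \prod_{j \neq i} N_j \prec N$. Hence $N$ is the $\prec$-leading monomial of $g \cdot \prod_{j \neq i} N_j$, so $N \in \init_\prec J$ and $N \notin S$.

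The one step requiring genuine care is confirming that $\prec$ is a monomial ordering compatible with multiplication: this is a bookkeeping exercise on how the merging of sorted order-sequences interacts with the lexicographic comparison of the tuples $(\ell, h_\ell, \ldots, h_0, i_\ell, \ldots, i_0)$, and although it is routine, it is the only place where anything beyond formal manipulation happens. Everything else in the argument is a transparent application of tensor products and of the basic definitions of standard monomials and initial ideals.
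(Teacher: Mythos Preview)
Your argument is correct and takes a genuinely different route from the paper's. The paper simply observes that if $G_i$ is a reduced Gr\"obner basis of $I_i$ (with respect to the degree lexicographic ordering, which agrees with $\prec$ on each $k[y_i^{(\infty)}]$), then $G := G_1 \cup \cdots \cup G_s$ is a Gr\"obner basis of $\langle I_1,\ldots,I_s\rangle$ with respect to $\prec$: S-polynomials of pairs from the same $G_i$ reduce to zero because $G_i$ is already a Gr\"obner basis, and S-polynomials of pairs from different $G_i,G_j$ reduce to zero by Buchberger's first criterion since their leading monomials live in disjoint variable sets and are therefore coprime. The description of the standard monomials then drops out immediately.

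Your approach instead identifies $R/J$ with $\bigotimes_i k[y_i^{(\infty)}]/I_i$ to see that $M_1\cdots M_s$ is a basis, and then proves $S \subseteq M_1\cdots M_s$ by a direct initial-term argument, deducing equality from linear independence. This avoids Buchberger's criterion entirely and is arguably more self-contained. On the other hand, the paper's proof is a one-liner once the criterion is invoked. Both proofs implicitly rely on the same two facts you flag: that $\prec$ is compatible with multiplication, and that its restriction to each $k[y_i^{(\infty)}]$ coincides with the degree lexicographic ordering. The paper takes these for granted; you are right that the multiplicativity check, while routine, is the only place requiring real care.
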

\begin{proof}
For each $I_i$, consider the reduced Gr\"obner basis $G_i$ of $I_i$ w.r.t. the degree lexicographic ordering. 
For each pair $f, g \in G := G_1\cup G_2\cup\ldots \cup G_s$, their S-polynomial is reduced to zero by $G$
\begin{itemize}
    \item if $f, g$ belong to the same $G_i$, due to the fact that $G_i$ is a Gr\"obner basis;
    \item otherwise, by the first Buchberger criterion (since $f$ and $g$ have coprime leading monomials).\qedhere
\end{itemize}
\end{proof}

\begin{proposition}\label{prop:indep}
Let $I_1\subset k[y_1^{(\infty)}], \ldots, I_s\subset  k[y_s^{(\infty)}]$  be homogeneous and isobaric ideals (not necessarily differential).
By $M_i$ we denote the set of standard monomials modulo $I_i$ w.r.t the degree lexicographic ordering for $1 \leqslant i \leqslant s$. 
We define a homomorphism (not necessarily differential)
\[
\varphi\colon k[x^{(\infty)}]\to k[y_1^{(\infty)},\ldots,y_{s}^{(\infty)}]/\langle I_1,\ldots ,I_s\rangle
\]
 by $\varphi(x^{(k)}) := y_1^{(k)} + \ldots + y_s^{(k)}$ and denote $I := \operatorname{Ker}(\varphi)$. 
 Then the elements of 
 \begin{equation}\label{eq:defM}
   M:=\{m_1 \ldots m_s \mid \forall 1 \leqslant i \leqslant s\colon m_i \in M_i \;\text{ and }\;\forall\; 1\leqslant j < s\colon \ord m_j \leqslant \lord m_{j + 1} \}
   \end{equation}
   are standard monomials modulo $I$ w.r.t. the  ordering $ \prec $ (but maybe not all the standard monomials). 
\end{proposition}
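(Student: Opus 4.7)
The plan is to adapt the witness-monomial strategy used in Proposition~\ref{prop:m2}. For each $N = m_1 m_2 \cdots m_s \in M$, define the distinguished monomial
\[
B(N) := \tilde{m}_1 \tilde{m}_2 \cdots \tilde{m}_s \in k[y_1^{(\infty)},\ldots,y_s^{(\infty)}],
\]
where $\tilde{m}_i$ denotes the monomial obtained from $m_i$ by relabeling $x \mapsto y_i$. Since each $m_i \in M_i$, Lemma~\ref{Buch} implies that $B(N)$ is a standard monomial modulo $\langle I_1,\ldots,I_s\rangle$, hence projects to a basis element of $Q := k[y_1^{(\infty)},\ldots,y_s^{(\infty)}]/\langle I_1,\ldots,I_s\rangle$. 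Expanding elements of $Q$ in the basis $M_1 \cdot M_2 \cdots M_s$ from Lemma~\ref{Buch}, it suffices to prove that the coefficient of $B(N)$ in $\varphi(N)$ is nonzero while the corresponding coefficient vanishes in $\varphi(P)$ for every monomial $P \prec N$. Such a linear functional separates $\varphi(N)$ from $\{\varphi(P):P\prec N\}$ in $Q$, giving $N \notin I + \operatorname{span}\{P:P\prec N\}$, which is exactly the statement that $N$ is standard modulo $I = \ker\varphi$.

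To access this coefficient, I would lift $\varphi$ to $\tilde\varphi\colon k[x^{(\infty)}] \to k[y_1^{(\infty)},\ldots,y_s^{(\infty)}]$ and expand $\tilde\varphi(P) = \sum_\sigma T_\sigma$ over assignments $\sigma$ of the factors of $P$ to blocks $\{1,\ldots,s\}$, with $T_\sigma = \prod_{i=1}^{s} T_{\sigma,i}(y_i)$ where $T_{\sigma,i}$ collects the factors sent to block $i$. Since the $I_i$'s sit in disjoint variable groups, normal form modulo $\langle I_1,\ldots,I_s\rangle$ factors as a product over $i$, so the coefficient of $B(N)$ in the image of $T_\sigma$ in $Q$ equals $\prod_i \bigl(\text{coefficient of }\tilde{m}_i \text{ in } \operatorname{NF}_{I_i}(T_{\sigma,i})\bigr)$. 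Homogeneity and isobaric-ness of each $I_i$ force $\deg T_{\sigma,i} = \deg m_i$ and $\tord T_{\sigma,i} = \tord m_i$ whenever this factor is nonzero, and since a Gr\"obner reduction of a monomial only produces standard monomials which are $\leqslant T_{\sigma,i}$ in degree lex, a nonzero contribution further requires $\tilde{m}_i \leqslant T_{\sigma,i}$ in degree lex on $k[y_i^{(\infty)}]$; equivalently, the sorted-decreasing sequence $\pi_i$ of orders of $T_{\sigma,i}$ satisfies $\pi_i \geqslant_{\lex} \mu_i$, where $\mu_i$ is the sorted-decreasing sequence of orders of $m_i$.

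The heart of the proof, and the main obstacle, is the following combinatorial lemma, to be proved by induction on $s$: if $\mu_1,\ldots,\mu_s$ are finite multi-sets of non-negative integers with $\max \mu_i \leqslant \min \mu_{i+1}$ (the non-overlapping condition given by $N\in M$), and $\pi_1,\ldots,\pi_s$ are multi-sets with $|\pi_i|=|\mu_i|$, $\sum \pi_i = \sum \mu_i$, and $\pi_i \geqslant_{\lex} \mu_i$ for every $i$, then the sorted-decreasing sequence of $\bigcup_i \pi_i$ is lex $\geqslant$ that of $\bigcup_i \mu_i$, with equality if and only if $\pi_i = \mu_i$ for every $i$. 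In the inductive step, the non-overlapping hypothesis makes the top $|\mu_s|$ entries of $\bigcup_i \mu_i$ equal to $\mu_s$, while the top $|\mu_s|$ entries of $\bigcup_i \pi_i$ dominate $\pi_s$ entrywise and therefore lex-dominate $\mu_s$; either this inequality is strict (and we win immediately), or equality together with sum-maximality of the top $|\mu_s|$-subset forces $\pi_s = \mu_s$, at which point we strip off $\mu_s$ and recurse on $s-1$. A delicate point worth noting is that lex and majorization order equal-sum integer sequences differently in general, but the non-overlapping hypothesis on the $\mu_i$'s is exactly what makes the top blocks align.

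Combining these pieces finishes the argument. For a monomial $P \prec N$ with $P \neq N$, suppose some $\sigma$ contributes nontrivially to the coefficient of $B(N)$ in $\varphi(P)$: then $\pi_i \geqslant_{\lex} \mu_i$ for every $i$, so the lemma yields the sorted-decreasing order sequence of $P$ lex $\geqslant$ that of $N$, strictly since $P \neq N$, contradicting $P \prec N$; hence this coefficient vanishes. For $P = N$, the same lemma forces every contributing $\sigma$ to satisfy $\pi_i = \mu_i$ for every $i$, equivalently $T_\sigma = B(N)$; the number of such $\sigma$ is the strictly positive multinomial $\prod_h \binom{c_h(N)}{c_h(m_1),\ldots,c_h(m_s)}$, where $c_h(\cdot)$ counts the multiplicity of order $h$, which is nonzero in characteristic zero. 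This shows the coefficient of $B(N)$ in $\varphi(N)$ is nonzero, completing the proof.
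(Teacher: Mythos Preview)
Your argument is correct. Both you and the paper use the same witness monomial $B(N)=m_1(y_1)\cdots m_s(y_s)$ and the same separation strategy, but the executions diverge at the point where you pass to normal forms.

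The paper never takes normal forms. It works with the lift $\tilde\varphi$ into $k[y_1^{(\infty)},\ldots,y_s^{(\infty)}]$ and projects onto the multi-homogeneous component determined by $(\deg_{y_j^{(\infty)}},\tord_{y_j})$ of $B(N)$. A short claim (using only the non-overlapping condition and an extremality argument: $m_j$ consists of the $|m_j|$ largest orders in $m_1\cdots m_j$, hence is the unique sub-multiset of that size with maximal sum) shows that this component of $\tilde\varphi(N)$ is exactly $B(N)$. For $P_j\prec N$, every monomial of $\tilde\varphi(P_j)$ has the same order multiset as $P_j$, hence is $\prec B(N)$. Thus $B(N)$ is the $\prec$-leading monomial of an element of $\langle I_1,\ldots,I_s\rangle$, contradicting Lemma~\ref{Buch}. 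No combinatorial lemma is needed.

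Your route, by contrast, tracks the coefficient of $B(N)$ in the quotient basis. Because Gr\"obner reduction of $T_{\sigma,i}$ can in principle produce $\tilde m_i$ even when $T_{\sigma,i}\neq\tilde m_i$, you are forced to prove the auxiliary lex-merging lemma. That lemma is correct (your inductive sketch is sound: the top block of $\bigcup\pi_i$ entrywise dominates $\pi_s$, hence lex-dominates $\mu_s$; equality plus equal sums pins $\pi_s=\mu_s$ and the recursion goes through), and it makes explicit exactly what the non-overlapping hypothesis buys combinatorially. The paper's approach is shorter and sidesteps this entirely by exploiting the multi-grading on the ideal rather than on the quotient; your approach is longer but isolates a reusable combinatorial statement.
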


\begin{proof} 
    Consider a monomial $P=x^{(h_0)}\cdots x^{(h_\ell)} \in M$, and fix a representation $P = m_1(x), \ldots, m_{s}(x)$ as in~\eqref{eq:defM}.
    Assume that $P$ is a leading monomial of $I$. Then there exist monomials $P_1, \ldots, P_N$ such that 
    \[
      P - \sum\limits_{j = 1}^N \lambda_j P_j \in \operatorname{Ker} \varphi \quad \text{ and }\quad \forall\; 1 \leqslant j \leqslant N\colon P_j\prec P.
    \]
    Then $\varphi(P)-\sum \lambda_j\varphi(P_j)\in \langle I_1,\ldots I_s\rangle $.
    We define $m := m_1(y_1) m_2(y_2)\ldots m_s(y_s)$.

    \textbf{Claim.} \emph{For every monomial $\widetilde{m} \neq m$ in $\varphi(P)$, there exists $1 \leqslant j \leqslant s$ such that either $\deg_{y_j^{(\infty)}} m \neq \deg_{y_j^{(\infty)}} \widetilde{m}$ or $\tord_{y_j} m \neq \tord_{y_j} \widetilde{m}$.}

    Assume the contrary that there exists $\widetilde{m}$ such that, for every $1 \leqslant j \leqslant s$,  $d_i := \deg_{y_j^{(\infty)}} m = \deg_{y_j^{(\infty)}} \widetilde{m}$ and $\tord_{y_j} m = \tord_{y_j} \widetilde{m}$.
    We write $\widetilde{m} = \widetilde{m}_1(y_1)\ldots \widetilde{m}_s(y_s)$.
    Let $1 \leqslant j \leqslant s$ be the largest index such that $m_j \neq \widetilde{m_j}$.
    Since $m_j$ contains $d_j$ largest derivatives in $m_1(x)\ldots m_j(x) = \widetilde{m}_1(x)\ldots \widetilde{m}_j(x)$ and has the same total order as $\widetilde{m}_j$, we conclude that $m_j = \widetilde{m}_j$.
    Thus, the \textbf{claim} is proved.
    
     We write the homogeneous and isobaric component of $\sum_{j = 1}^N\lambda_j\varphi(P_j)$ of the same degree and total order in $y_i$ as $m$ for every $1 \leqslant i \leqslant s$ as $\sum_{i = 1}^M \mu_i R_i$, where $R_i$ is a differential monomial and $\mu_i \in k$ for every $1 \leqslant i \leqslant M$.
     Then such a homogeneous and isobaric component of $\varphi(P)-\sum_{j = 1}^N \lambda_j \varphi(P_j)$ is  
     $Q := m - \sum_{i = 1}^M \mu_i R_i$
     due to the claim.
     Since, for every $1 \leqslant i \leqslant s$, $I_s$ is homogeneous and isobaric, $Q \in \langle I_1, \ldots, I_s\rangle$.
     
     Note that for every $1\leqslant i \leqslant M$, $R_{i}$ is a summand of $\varphi(P_j)$ for some $1 \leqslant j \leqslant N$.  
     Thus, if $P_j = x^{(s_0)} \ldots x^{(s_\ell)}$, then the derivatives that appear in the monomial $R_i$ are of orders $s_0 , \ldots , s_\ell$. 
     Hence $P_j \prec P$ implies $R_j \prec m$. 
     Therefore $m$ is the leading monomial of $Q$ contradicting Lemma~\ref{Buch}.
\end{proof}

\begin{corollary}\label{cor:linear_independence}
The elements of  $\mathcal{F}_{i - 1, m - i}$ are standard monomials modulo $\langle x^i, (x^m)^{(\infty)} \rangle$.
\end{corollary}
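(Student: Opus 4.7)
The plan is to invoke Proposition~\ref{prop:indep} with $s = m-1$ and auxiliary ideals that realize $\mathcal{F}$ and $\mathcal{F}_s$ as sets of standard monomials in separate copies of the variable. Introduce fresh differential indeterminates $y_1, \ldots, y_{m-1}$ and set
\[
I_j := \langle (y_j^2)^{(\infty)} \rangle \;\text{ for } 1 \leqslant j \leqslant i-1, \qquad I_j := \langle y_j,\, (y_j^2)^{(\infty)} \rangle \;\text{ for } i \leqslant j \leqslant m-1,
\]
each inside $k[y_j^{(\infty)}]$. All generators are homogeneous and isobaric, and Proposition~\ref{prop:m2} (applied with its $m$ equal to $2$) identifies the standard monomials modulo $I_j$ with $\mathcal{F}$ for $j \leqslant i-1$ and with $\mathcal{F}_s$ for $j \geqslant i$, so the hypotheses of Proposition~\ref{prop:indep} are met.

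Let $\varphi$ be the homomorphism provided by Proposition~\ref{prop:indep}. Lemma~\ref{lem:nonoverlap} identifies the non-overlapping product set $M$ of~\eqref{eq:defM} for this data with $\mathcal{F}_{i-1, m-i}$, so Proposition~\ref{prop:indep} yields that every element of $\mathcal{F}_{i-1, m-i}$ is a standard monomial modulo $\operatorname{Ker}\varphi$. Since the leading-monomial set of a sub-ideal is contained in that of a larger ideal, it suffices now to verify the containment $\langle x^i,\, (x^m)^{(\infty)} \rangle \subseteq \operatorname{Ker}\varphi$.

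This last containment is the main obstacle, since annihilating the infinite family $\{(x^m)^{(j)}\}_{j \geqslant 0}$ under the \emph{non-differential} map $\varphi$ is not self-evident. I would handle it by packaging everything into a single generating series: set $X(t) := \sum_{k \geqslant 0} x^{(k)} t^k/k!$ in $k[x^{(\infty)}][[t]]$ and $Y_j(t) := \sum_{k \geqslant 0} y_j^{(k)} t^k/k!$, so that $\varphi(X(t)) = Y_1(t) + \cdots + Y_{m-1}(t)$ and $X(t)^m = \sum_{j \geqslant 0} (x^m)^{(j)} t^j/j!$. Applying $\varphi$ to both sides gives $(Y_1(t) + \cdots + Y_{m-1}(t))^m$, and in the quotient each $Y_j(t)^2$ vanishes because its coefficients $(y_j^2)^{(k)}$ all lie in $I_j$. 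The multinomial expansion then retains only squarefree terms in the $Y_j(t)$'s, each of total degree at most $m-1 < m$, so the whole sum is zero, whence $\varphi((x^m)^{(j)}) = 0$ for every $j$. The analogous argument with $m$ replaced by $i$, additionally using $Y_j(t) \equiv 0$ for $j \geqslant i$, shows $\varphi(x^i) = (Y_1(0) + \cdots + Y_{i-1}(0))^i = 0$.
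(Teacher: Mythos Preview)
Your proof is correct and follows the same overall strategy as the paper: set up the ideals $I_j$ exactly as you do, identify their standard monomials via Proposition~\ref{prop:m2}, and apply Proposition~\ref{prop:indep}. You are also more explicit than the paper in two places: you invoke Lemma~\ref{lem:nonoverlap} to match the set $M$ of~\eqref{eq:defM} with $\mathcal{F}_{i-1,m-i}$ (the paper leaves this implicit), and you spell out that standard monomials of $\operatorname{Ker}\varphi$ remain standard for any sub-ideal.

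The one genuine difference is how you establish $\langle x^i,(x^m)^{(\infty)}\rangle\subseteq\operatorname{Ker}\varphi$. The paper routes this through the exterior-algebra embedding (Lemma~\ref{lem:ext_extra}), implicitly factoring $\varphi$ through $\Lambda$ and using that $(\sum_j\eta_j\otimes\xi_j)^m=0$ there. Your exponential-generating-series argument is more elementary and self-contained: since $Y_j(t)^2=\sum_n (y_j^2)^{(n)}t^n/n!$ vanishes in the quotient, the multinomial expansion of $(\sum_{j=1}^{m-1}Y_j(t))^m$ retains only squarefree monomials in the $Y_j(t)$, which is impossible with $m$ factors and $m-1$ letters; hence every coefficient $\varphi((x^m)^{(k)})$ is zero. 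The payoff is that this step no longer depends on Section~\ref{sec:exterior} at all, whereas the paper's route reuses machinery already built for other purposes.
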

\begin{proof}
  We will use Proposition~\ref{prop:indep}.
  Consider the ideals 
  \[
  I_1 = \langle y_1^2 \rangle^{(\infty)}, \ldots ,I_{i-1}= \langle y_{i-1}^2\rangle^{(\infty)}, I_i=\langle y_i, (y_i^2)^{(\infty)}\rangle,\ldots, I_{m-1}=\langle y_{m-1}, (y_{m-1}^2)^{(\infty)}\rangle.
  \]
  and define $\varphi$ as in Proposition~\ref{prop:indep}.
  Lemma~\ref{lem:ext_extra} implies that $\varphi((x^m)^{(k)}) = ((y_1+\ldots+y_{m - 1})^m)^{(k)}=0$ for every $k \ge 1$ and $\varphi(x^i)=(y_1+\ldots+y_{i - 1})^i=0$. 
  Therefore, $\langle (x^m)^{(\infty)},x^i\rangle \subset \operatorname{Ker}(\varphi)$.
  Proposition~\ref{prop:m2} implies that the standard monomials modulo $I_j$ are the fair monomials for $j < i$ and strongly fair monomials for $i \leqslant j$.
  Therefore, Proposition~\ref{prop:indep}  implies that $\mathcal{F}_{i - 1, m - i}$ are standard monomials modulo $\langle x^i, (x^m)^{(\infty)}\rangle$.
\end{proof}


\subsection{Putting everything together: proofs of the main results}

\begin{proof}[Proof of Theorem~\ref{thm:dimension}]
    Consider the images of $\mathcal{F}_{m - 1, 0} \cap k[x^{(\leqslant h)}]$ in $k[x^{(\infty)}] / \langle x^m \rangle^{(\infty)}$.
    By Corollary~\ref{cor:linear_independence}, they are linearly independent modulo $\langle x^m \rangle^{(\infty)}$.
    Then Proposition~\ref{prop:count} implies that the dimension of $k[x^{(\leqslant h)}] / \langle x^m\rangle^{(\infty)}$ is at least $m^{h + 1}$.
    Together with Proposition~\ref{prop:dim_bound}, this implies 
    $$\dim (k[x^{(\leqslant h)}] / \langle x^m\rangle^{(\infty)}) = m^{h + 1}.\qedhere$$
\end{proof}

\begin{proof}[Proof of Theorem~\ref{thm:non-initial}]
  Fix $h \geqslant 0$.
  Consider $\mathcal{F}_{i - 1, m - i} \cap k[ x^{(\leqslant h ) }]$.
  Combining Corollary~\ref{cor:linear_independence}, Corollary~\ref{cor:dim_bound}, and Proposition~\ref{prop:count}, we show that the image of this set in $k[x^{(\leqslant h)}] / \langle (x^m)^{(\infty)}, x^i \rangle$ forms a basis.
  Thus, the image of the whole $\mathcal{F}_{i - 1, m - i}$ is a basis of $k[x^{(\infty)}] / \langle (x^m)^{(\infty)}, x^i \rangle$. 
  Therefore, by Corollary \ref{cor:linear_independence}, $\mathcal{F}_{i - 1, m - i}$ coincides with the set of standard monomials modulo $\langle (x^m)^{(\infty)}, x^i \rangle$.
\end{proof}

\begin{proof}[Proof of Corollary~\ref{cor:non-initial}]
  Since the ideal $\langle x^i, (x^m)^{(\infty)} \rangle$ is generated by homogeneous and isobaric (that is, weight-homogeneous) polynomials, its Gr\"obner bases with respect to the purely lexicographic, degree lexicographic, and weighted lexicographic orderings coincide.
\end{proof}


\section{Computational experiments for more general fat points}\label{sec:compute}

In this section, we consider a more general case of a fat point in a $n$-dimensional space, not just on a line.
We used {\sc Macaulay2}~\cite{M2} and, in particular, package {\sc Jets}~\cite{galetto2021computing, jetsM2} to explore possible analogues of our Theorem~\ref{thm:dimension} for this more general case. 
A related Sage implementation for computing the arc space of an affine scheme with respect to a fat point can be found in \cite[Section 9]{STOUT} and \cite[Section 5.4]{StoutPhD}. 

Let $\mathbf{x} = (x_1, \ldots, x_n)$, and consider a zero-dimensional ideal $I \subset k[\mathbf{x}]$.
We will be interested in describing (in particular, in computing the dimension of the quotient ring) $I^{(\infty)} \cap k[\mathbf{x}^{(\leqslant h)}]$ for a positive integer $h$.
Since this ideal is the union of the following chain 
\[
  I^{(\leqslant 1)} \cap k[\mathbf{x}^{(\leqslant h)}] \subseteq I^{(\leqslant 2)} \cap k[\mathbf{x}^{(\leqslant h)}] \subseteq I^{(\leqslant 3)} \cap k[\mathbf{x}^{(\leqslant h)}] \subseteq \ldots
\]
and $k[\mathbf{x}^{(\leqslant h)}]$ is Noetherian, one can compute $I^{(\infty)} \cap k[\mathbf{x}^{(\leqslant h)}]$ by computing $I^{(\leqslant H)} \cap k[\mathbf{x}^{(\leqslant h)}]$ for large enough $H$.
But how to determine what $H$ is ``large enough''?
\begin{itemize}
    \item For the case $I = \langle x^m \rangle \subset k[x]$, the answer is given by our Theorem~\ref{thm:dimension}: if the dimension $k[x^{(\leqslant h)}] / (I^{(\leqslant H)} \cap k[x^{(\leqslant h)}])$ is equal to $m^{h + 1}$, then $I^{(\leqslant H)} \cap k[x^{(\leqslant h)}] = I^{(\infty)} \cap k[x^{(\leqslant h)}]$.
    \item For the case of general $I$, we take $H$ to be $1, 2, \ldots$, and we stop when we encounter $I^{(\leqslant H)} \cap k[\mathbf{x}^{(\leqslant h)}] = I^{(\leqslant H + 1)} \cap k[\mathbf{x}^{(\leqslant h)}]$.
    We conjecture that in this case $I^{(\leqslant H)} \cap k[\mathbf{x}^{(\leqslant h)}] = I^{(\infty)} \cap k[\mathbf{x}^{(\leqslant h)}]$ (see Question~\ref{q:stopping}) but, strictly speaking, we know only $I^{(\leqslant H)} \cap k[\mathbf{x}^{(\leqslant h)}] \subseteq I^{(\infty)} \cap k[\mathbf{x}^{(\leqslant h)}]$.
\end{itemize}

\subsection{Ideals $I = \langle x^m \rangle$}

For ideals of the form $\langle x^m \rangle$, the approach outlined above yields a complete algorithm to compute $I^{(\infty)} \cap k[x^{(\leqslant h)}]$ for any given $h$ and $m$.
We use it for computing examples of Gr\"obner bases for these ideals w.r.t. the lexicographic ordering.

{\small
\def\arraystretch{1.5}
\begin{table}[H]
\centering
\begin{tabular}{|l|l|}
\hline
 Ideal  & Gr\"obner basis   \\[0.8ex]
 \hline
 $\langle x^2 \rangle^{(\infty)} \cap k[x^{(\leqslant 2)}]$  &  $(x'')^4;\,  x' (x'')^2;\, (x')^2 x'';\, (x')^3;\, 2 x x'' + (x')^2;\, x x';\, x^2$  \\
 \hline
 $\langle x^3 \rangle^{(\infty)} \cap k[x^{(\leqslant 2)}]$  &  $(x'')^7;\, x' (x'')^5;\, (x')^2 (x'')^4;\, (x')^3 (x'')^2;\, (x')^4 x'';\, (x')^5;\, x (x'')^4 + 2 (x')^2 (x'')^3;$\\
 & $\, 3 x x' (x'')^2 + (x')^3 x'' ;\, 6 x (x')^2 x'' + (x')^4;\, x (x')^3;\, x^2 x'' + x (x')^2;\, x^2 x';\, x^3$\\
 \hline
\end{tabular}
\end{table}}

\subsection{General fat points}

In this subsection, we consider a general zero-dimensional $I \subset k[\mathbf{x}]$ with the zero set of $I$ being the origin.
We use the following algorithm following the approach described in the beginning of the section to obtain an upper bound of the dimensions of $k[\mathbf{x}^{(\leqslant h)}] / (I^{(\infty)} \cap k[\mathbf{x}^{(\leqslant h)}])$.

\begin{enumerate}[label = \textbf{(Step~\arabic*)}, leftmargin=17mm, itemsep=1pt]
    \item Set $H = 1$.
    \item While the dimension of $I^{(\leqslant H)} \cap k[\mathbf{x}^{(\leqslant h)}]$ is not zero or $I^{(\leqslant H)} \cap k[\mathbf{x}^{(\leqslant h)}] \neq I^{(\leqslant H + 1)} \cap k[\mathbf{x}^{(\leqslant h)}]$, set $H = H + 1$.
    \item Return $\dim \bigl( k[\mathbf{x}^{(\leqslant h)}] / (I^{(\leqslant H)} \cap k[\mathbf{x}^{(\leqslant h)}]) \bigr)$.
\end{enumerate}
We expect the resulting bound to be exact (see also Question~\ref{q:stopping}), for example, it is exact for the ideals $I = \langle x^m\rangle$.

Our implementation of this algorithm in Macaulay2 is available at \url{ https://mathrepo.mis.mpg.de/MultiplicityStructureOfArcSpaces}.
Table~\ref{table:general_good} below shows some of the results we obtained.
One can see that the computed dimensions form geometric series with the exponent being the multiplicity of the original ideal exactly as in Theorem~\ref{thm:dimension}.
\def\arraystretch{1.1}
\begin{table}[H]
\centering
\begin{tabular}{|l|l|l|l|l|}
\hline
 Ideal & $h = 0$ & $h = 1$ & $h = 2$ & $ h = 3$  \\[0.6ex]
 \hline
 $\langle x^2, y^2, xy \rangle$ & $3$ & $9$ &  $27$ & $81$ \\[0.6ex]
 \hline 
 $\langle x^2, y^2, xz, yz, z^2 - xy \rangle$ & $5$ & $25$ &  $125$ & -  \\[0.6ex]
 \hline
 $\langle x^3, y^2, x^2y \rangle$ & $5$ & $25$ &  $125$ & -  \\[0.6ex]
 \hline
 $\langle x^3, y^2, xy \rangle$ & $4$ & $16$ &  $64$ & $256$  \\[0.6ex]
 \hline
 $\langle x^3, y^3, x^2y \rangle$ & $7$ & $49$ &  $-$ & -  \\[0.6ex]
 \hline
  $ \langle x^4, y^4, x^2y^3\rangle $ &  14 & 196 & - & -\\[0.6ex]
  \hline
\end{tabular}
\caption{(Bounds for) the dimensions of the truncations of the arc space}\label{table:general_good}
\end{table}

However, we have also found ideals for which the generating series of the dimensions is definitely not equal to $\frac{m}{1 - mt}$, where $m$ is the multiplicity of the ideal.
We show some examples of this type in Table~\ref{table:general_bad}.
\begin{table}[H]
\centering
\begin{tabular}{|l|l|l|l||l|l|l|l|}
\hline
 Ideal & $h = 0$ & $h = 1$ & $h = 2$ & Ideal & $h = 0$ & $h = 1$ & $h = 2$ \\
 \hline
 $\langle x^3, y^3, xy \rangle$ & $5$ & $24$ &  $115$ &   $ \langle x^4, y^4, x^2y\rangle $ &  $10$ & $94$ & - \\
 \hline
 $\langle x^4, y^3, xy \rangle$ & $6$ & $33$ &  - & $\langle x^4, y^4, x^2y^2\rangle $ &  $12$ & $140$ & -  \\
 \hline
 $\langle x^4, y^3, x^2y \rangle$ & $8$ & $62$ &  -  & $ \langle x^4, y^6, x^2y^3\rangle $ &  $18$ & $320$ & - \\
 \hline
  $ \langle x^4, y^4, xy\rangle $ &  $7$ & $42$ & - & & & & \\
  \hline
  \end{tabular}
  \caption{(Bounds for) the dimensions of the truncations of the arc space}\label{table:general_bad}
\end{table}

Note that while Table~\ref{table:general_good} gives only indication that the generating series of the multiplicities for these ideals may be $\frac{m}{1 - mt}$, Table~\ref{table:general_bad} gives a proof that this is not the case for all the fat points.

\subsection{Open questions}

Based on the results of the computational experiments, we formulate several open questions.

\begin{question}\label{q:stopping}
  Let $I \subset k[\mathbf{x}]$ be a zero-dimensional ideal with $V(I)$ being a single point. 
  Is it true that, for every integer $h$:
  \[
    \bigl( I^{(\leqslant H)} \cap k[\mathbf{x}^{(\leqslant h)}] = I^{(\leqslant H + 1)} \cap k[\mathbf{x}^{(\leqslant h)}] \bigr) \implies \bigl( I^{(\leqslant H)} \cap k[\mathbf{x}^{(\leqslant h)}] = I^{(\leqslant \infty)} \cap k[\mathbf{x}^{(\leqslant h)}] \bigr)?
  \]
  Does this statement remain true if we drop the assumption $|V(I)| = 1$?
\end{question}

\begin{question}
  Let $I \subset k[\mathbf{x}]$ be a zero-dimensional ideal with $V(I)$ being a single point of multiplicity $m$. 
  Is it true that
  \[
    \lim\limits_{h \to \infty} \frac{\dim k[\mathbf{x}^{(\leqslant h)}] / I^{(\infty)} }{m^{h + 1}} = 1?
  \]
\end{question}

\begin{question}
  Let $I \subset k[\mathbf{x}]$ be a zero-dimensional ideal with $V(I)$ being a single point of multiplicity $m$. 
  Under which conditions it is true that
  \[
    \sum\limits_{h = 0}^{\infty} (\dim k[\mathbf{x}^{(\leqslant h)}] / I^{(\infty)} ) \cdot t^h = \frac{m}{1 - mt}?
  \]
  More generally, what information about the corresponding scheme can be read off the above generating series?
\end{question}


\subsection*{Acknowledgements}

The authors are grateful to Joris van der Hoeven, Hussein Mourtada, Bernd Sturmfels, Dmitry Trushin, and the referees for helpful discussions. We thank Yassine El Maazouz and Claudia Fevola for their support with making the Mathrepo webpage.
GP was partially supported by NSF grants DMS-1853482, DMS-1760448, and  DMS-1853650, by the Paris Ile-de-France region, and by the MIMOSA project funded by AAP INS2I CNRS.

\bibliographystyle{abbrvnat}
\bibliography{bibdata}

\begin{thebibliography}{39}
\providecommand{\natexlab}[1]{#1}
\providecommand{\url}[1]{\texttt{#1}}
\expandafter\ifx\csname urlstyle\endcsname\relax
  \providecommand{\doi}[1]{doi: #1}\else
  \providecommand{\doi}{doi: \begingroup \urlstyle{rm}\Url}\fi

\bibitem[Afsharijoo(2021)]{Afsharijoo2021}
P.~Afsharijoo.
\newblock Looking for a new version of {G}ordon's identities.
\newblock \emph{Annals of Combinatorics}, 25:\penalty0 543--571, 2021.
\newblock URL \url{https://doi.org/10.1007/s00026-021-00530-x}.

\bibitem[Afsharijoo and Mourtada(2020)]{AM2020}
P.~Afsharijoo and H.~Mourtada.
\newblock Partition identities and application to infinite-dimensional
  {G}r\"{o}bner basis and vice versa.
\newblock In \emph{Arc Schemes and Singularities}, pages 145--161. {W}orld
  {S}cientific ({E}urope), Mar. 2020.
\newblock URL \url{https://doi.org/10.1142/9781786347206_0009}.

\bibitem[Afsharijoo et~al.(2022)Afsharijoo, Dousse, Jouhet, and
  Mourtada]{lotharing}
P.~Afsharijoo, J.~Dousse, F.~Jouhet, and H.~Mourtada.
\newblock New companions to {G}ordon identities from commutative algebra.
\newblock \emph{S\'eminaire Lotharingien de Combinatoire}, 86B, 2022.
\newblock URL
  \url{https://www.mat.univie.ac.at/~slc/wpapers/FPSAC2022/48.html}.

\bibitem[Afsharijoo et~al.(2023)Afsharijoo, Dousse, Jouhet, and
  Mourtada]{afsharijoo2021andrewsgordon}
P.~Afsharijoo, J.~Dousse, F.~Jouhet, and H.~Mourtada.
\newblock New companions to the {A}ndrews–{G}ordon identities motivated by
  commutative algebra.
\newblock \emph{Advances in Mathematics}, 417:\penalty0 108946, 2023.
\newblock URL \url{https://doi.org/10.1016/j.aim.2023.108946}.

\bibitem[Ait El~Manssour and Sattelberger(2023)]{RidaAnnaLaura}
R.~Ait El~Manssour and A.-L. Sattelberger.
\newblock Combinatorial differential algebra of $x^p$.
\newblock \emph{Journal of Symbolic Computation}, 114:\penalty0 193--208, 2023.
\newblock URL \url{https://doi.org/10.1016/j.jsc.2022.04.010}.

\bibitem[Arakawa et~al.(2021)Arakawa, Kawasetsu, and Sebag]{vertex}
T.~Arakawa, K.~Kawasetsu, and J.~Sebag.
\newblock A question of {J}oseph {R}itt from the point of view of vertex
  algebras.
\newblock \emph{Journal of Algebra}, 588:\penalty0 118--128, 2021.
\newblock URL \url{https://doi.org/10.1016/j.jalgebra.2021.07.030}.

\bibitem[Bai et~al.(2019)Bai, Gorsky, and Kivinen]{Bai2019}
Y.~Bai, E.~Gorsky, and O.~Kivinen.
\newblock Quadratic ideals and {R}ogers-{R}amanujan recursions.
\newblock \emph{The Ramanujan Journal}, 52\penalty0 (1):\penalty0 67--89, 2019.
\newblock URL \url{https://doi.org/10.1007/s11139-018-0127-3}.

\bibitem[Binyamini(2017)]{diffBezout}
G.~Binyamini.
\newblock {B}ezout-type theorems for differential fields.
\newblock \emph{Compositio Mathematica}, 153\penalty0 (4):\penalty0 867--888,
  2017.
\newblock URL \url{https://doi.org/10.1112/s0010437x17007035}.

\bibitem[Bourqui and Haiech(2020)]{BH2020}
D.~Bourqui and M.~Haiech.
\newblock On the nilpotent functions at a non-degenerate arc.
\newblock \emph{Manuscripta Mathematica}, 165\penalty0 (1-2):\penalty0
  227--238, 2020.
\newblock URL \url{https://doi.org/10.1007/s00229-020-01209-y}.

\bibitem[Bourqui et~al.(2020)Bourqui, Nicaise, and Sebag]{arcsbook}
D.~Bourqui, J.~Nicaise, and J.~Sebag.
\newblock \emph{Arc Schemes and Singularities}.
\newblock {W}orld {S}cientific ({E}urope), Jan. 2020.
\newblock URL \url{https://doi.org/10.1142/q0213}.

\bibitem[Bruschek et~al.(2012)Bruschek, Mourtada, and Schepers]{Bruschek2012}
C.~Bruschek, H.~Mourtada, and J.~Schepers.
\newblock Arc spaces and the {R}ogers-{R}amanujan identities.
\newblock \emph{The Ramanujan Journal}, 30\penalty0 (1):\penalty0 9--38, 2012.
\newblock URL \url{https://doi.org/10.1007/s11139-012-9401-y}.

\bibitem[Denef and Loeser(1999)]{DeLoRational}
J.~Denef and F.~Loeser.
\newblock Germs of arcs on singular algebraic varieties and motivic
  integration.
\newblock \emph{Inventiones Mathematicae}, 135\penalty0 (1):\penalty0 201--232,
  1999.
\newblock URL \url{https://doi.org/10.1007/s002220050284}.

\bibitem[Denef and Loeser(2001)]{ArcLecture}
J.~Denef and F.~Loeser.
\newblock Geometry on arc spaces of algebraic varieties.
\newblock In C.~Casacuberta, R.~M. Mir{\'{o}}-Roig, J.~Verdera, and
  S.~Xamb{\'{o}}-Descamps, editors, \emph{European Congress of Mathematics}.
  Birkh\"{a}user Basel, 2001.
\newblock URL \url{https://doi.org/10.1007/978-3-0348-8268-2}.

\bibitem[Dumanski and Feigin(2021)]{dumanski2021reduced}
I.~Dumanski and E.~Feigin.
\newblock Reduced arc schemes for {V}eronese embeddings and global {D}emazure
  modules, 2021.
\newblock URL \url{https://arxiv.org/abs/1912.07988}.

\bibitem[Feigin and Feigin(2002)]{Feigin2002}
B.~Feigin and E.~Feigin.
\newblock Q-characters of the tensor products in {$\operatorname{sl}_2$}-case.
\newblock \emph{Moscow Mathematical Journal}, 2\penalty0 (3):\penalty0
  567–588, 2002.
\newblock URL \url{http://dx.doi.org/10.17323/1609-4514-2002-2-3-567-588}.

\bibitem[Feigin and Makedonskyi(2018)]{FM2018}
E.~Feigin and I.~Makedonskyi.
\newblock Semi-infinite {P}l\"{u}cker relations and {W}eyl modules.
\newblock \emph{International Mathematics Research Notices}, 2020\penalty0
  (14):\penalty0 4357--4394, 2018.
\newblock URL \url{https://doi.org/10.1093/imrn/rny121}.

\bibitem[Galetto and Iammarino()]{jetsM2}
F.~Galetto and N.~Iammarino.
\newblock A package for computing jets of various algebraic, geometric and
  combinatorial objects available at.
\newblock
  \url{https://faculty.math.illinois.edu/Macaulay2/doc/Macaulay2-1.19/share/doc/Macaulay2/Jets/html/index.html}.

\bibitem[Galetto and Iammarino(2021)]{galetto2021computing}
F.~Galetto and N.~Iammarino.
\newblock Computing with jets, 2021.
\newblock URL \url{https://arxiv.org/abs/2108.06350}.

\bibitem[Grayson and Stillman()]{M2}
D.~R. Grayson and M.~E. Stillman.
\newblock Macaulay2, a software system for research in algebraic geometry.
\newblock Available at \url{http://www.math.uiuc.edu/Macaulay2/}.

\bibitem[Hartshorne(1977)]{hartshorne}
R.~Hartshorne.
\newblock \emph{Algebraic Geometry}.
\newblock Number~52 in Graduate Texts in Mathematics. Springer, 1977.
\newblock URL \url{http://dx.doi.org/10.1007/978-1-4757-3849-0}.

\bibitem[Kaplansky(1957)]{Kaplansky}
I.~Kaplansky.
\newblock \emph{An Introduction to Differential Algebra}.
\newblock Hermann, 1957.

\bibitem[Kolchin(1973)]{Kolchin}
E.~Kolchin.
\newblock \emph{Differential Algebra and Algebraic Groups}.
\newblock Academic Press, 1973.

\bibitem[Kolchin(1964)]{koldim}
E.~R. Kolchin.
\newblock {The notion of dimension in the theory of algebraic differential
  equations}.
\newblock \emph{Bulletin of the American Mathematical Society}, 70\penalty0
  (4):\penalty0 570 -- 573, 1964.
\newblock \doi{bams/1183526103}.

\bibitem[Kondratieva et~al.(1999)Kondratieva, Levin, Mikhalev, and
  Pankratiev]{diffpimpoly}
M.~V. Kondratieva, A.~B. Levin, A.~V. Mikhalev, and E.~V. Pankratiev.
\newblock \emph{Differential and Difference Dimension Polynomials}.
\newblock Springer Netherlands, 1999.
\newblock URL \url{https://doi.org/10.1007/978-94-017-1257-6}.

\bibitem[Levi(1942)]{lowpower}
H.~Levi.
\newblock On the structure of differential polynomials and on their theory of
  ideals.
\newblock \emph{Transactions of American Mathematical Society}, 51:\penalty0
  532–568, 1942.

\bibitem[Levi(1945)]{lowpower2}
H.~Levi.
\newblock The low power theorem for partial differential polynomials.
\newblock \emph{Annals of Mathematics}, 46\penalty0 (1):\penalty0 113, 1945.
\newblock URL \url{https://doi.org/10.2307/1969151}.

\bibitem[Linshaw and Song(2021)]{linshaw2021standard}
A.~R. Linshaw and B.~Song.
\newblock Standard monomials and invariant theory for arc spaces {III}: special
  linear group, 2021.
\newblock URL \url{https://arxiv.org/abs/2108.08991}.

\bibitem[Mourtada(2014)]{M13}
H.~Mourtada.
\newblock Jet schemes of rational double point singularities.
\newblock pages 373--388. European Mathematical Society Publishing House, 2014.
\newblock URL \url{https://doi.org/10.4171/149-1/18}.

\bibitem[Mourtada(2023)]{handbook}
H.~Mourtada.
\newblock Jet schemes and their applications in singularities, toric
  resolutions and integer partitions.
\newblock In J.~L. Cisneros-Molina, L.~D. Tring, and J.~Seade, editors,
  \emph{Handbook of {G}eometry and {T}opology of {S}ingularities {IV}},
  chapter~4, pages 266--290. Springer, 2023.

\bibitem[O'Keefe(1960)]{Keefe1960}
K.~B. O'Keefe.
\newblock A property of the differential ideal $y^p$.
\newblock \emph{Transactions of American Mathematical Society}, 94\penalty0
  (3):\penalty0 483, 1960.
\newblock URL \url{https://doi.org/10.2307/1993435}.

\bibitem[Pogudin(2014)]{x2}
G.~Pogudin.
\newblock Primary differential nil-algebras do exist.
\newblock \emph{Moscow University Mathematics Bulletin}, 69\penalty0
  (1):\penalty0 33--36, 2014.
\newblock URL \url{https://doi.org/10.3103/s0027132214010069}.

\bibitem[Pogudin(2016)]{GlebsPhd}
G.~Pogudin.
\newblock \emph{Prime differential algebras and the Lie algebras associated to
  them (in Russian)}.
\newblock PhD thesis, Moscow State University, 2016.
\newblock URL \url{http://mech.math.msu.su/~snark/files/diss/0091diss.pdf}.

\bibitem[Pong()]{ranks}
W.~Y. Pong.
\newblock Rank inequalities in the theory of differentially closed fields.
\newblock pages 232--243. Cambridge University Press.
\newblock URL \url{https://doi.org/10.1017/9781316755785.013}.

\bibitem[Ritt(1950)]{Ritt}
J.~F. Ritt.
\newblock \emph{Differential Algebra}, volume~33 of \emph{Colloquium
  Publications}.
\newblock American Mathematical Society, 1950.

\bibitem[Sebag(2011)]{S11}
J.~Sebag.
\newblock Arcs schemes, derivations and {L}ipman's theorem.
\newblock \emph{Journal of Algebra}, 347\penalty0 (1):\penalty0 173--183, 2011.
\newblock URL \url{https://doi.org/10.1016/j.jalgebra.2011.08.018}.

\bibitem[Stout(2014)]{StoutPhD}
A.~R. Stout.
\newblock \emph{Motivic Integration over Nilpotent Structures}.
\newblock PhD thesis, 2014.
\newblock URL
  \url{https://academicworks.cuny.edu/gc_etds/499/?msclkid=b56499ddb53611ecaa4f6e31eac6348b}.

\bibitem[Stout(2017)]{STOUT}
A.~R. Stout.
\newblock On the auto {I}gusa-zeta function of an algebraic curve.
\newblock \emph{Journal of Symbolic Computation}, 79:\penalty0 156--185, 2017.
\newblock URL
  \url{https://www.sciencedirect.com/science/article/pii/S0747717116300906}.

\bibitem[Zobnin(2005)]{Zobnin2005}
A.~Zobnin.
\newblock Admissible orderings and finiteness criteria for differential
  standard bases.
\newblock ISSAC '05, page 365–372. Association for Computing Machinery, 2005.
\newblock URL \url{https://doi.org/10.1145/1073884.1073935}.

\bibitem[Zobnin(2009)]{Zobnin2009}
A.~I. Zobnin.
\newblock One-element differential standard bases with respect to inverse
  lexicographical orderings.
\newblock \emph{Journal of Mathematical Sciences}, 163\penalty0 (5):\penalty0
  523--533, 2009.
\newblock URL \url{https://doi.org/10.1007/s10958-009-9690-x}.

\end{thebibliography}
\end{document}